\documentclass[11pt]{article}
\usepackage{amssymb}
\usepackage{amsmath}
\usepackage{amsthm}
\usepackage{hyperref}
\usepackage{amsfonts}
\usepackage{url}
\usepackage{color}
\usepackage{epsfig}
\usepackage{bm}
\usepackage{accents}
\usepackage{empheq}
\graphicspath{{Figs/}}

\newtheoremstyle{thmm}{1.5ex plus 1ex minus .2ex}{1.5ex plus 1ex minus.2ex}
{\rmfamily}{}{\bfseries}{}{1em}{}
\theoremstyle{thmm}
\newtheorem{theorem}{Theorem}[section]
\newtheorem{lemma}{Lemma}[section]

\newtheorem{remark}{Remark}[section]
\renewenvironment{proof}[1][Proof]{\noindent\textit{#1.} }{\hfill$\square$=}
\newtheorem{example}{Example}[section]

\allowdisplaybreaks \textwidth  6.0in \textheight 9in
\topmargin -0.6in \oddsidemargin 0.2in \evensidemargin 0.0in

\newcommand{\nn}{\nonumber}

\def\refe#1{(\ref{#1})}

%----------------------
\makeatletter            %  Roman   number

\newcommand{\Rmnum}[1]{\expandafter\@slowromancap\romannumeral #1@}
\makeatother

%----------------------
\newcommand{\bA}{\mathbf{A}}
\newcommand{\bB}{\mathbf{B}}
\newcommand{\bC}{\mathbf{C}}
\newcommand{\bu}{\mathbf{u}}
\newcommand{\bv}{\mathbf{v}}

\newcommand{\D}{\mathrm{div}}
\newcommand{\C}{\mathbf{curl}}
\newcommand{\curl}{\mathrm{curl}}

\newcommand{\n}{\mathbf{n}}

\newcommand{\bH}{\mathbf{H}}
\newcommand{\bQ}{\mathbf{Q}}

\newcommand{\bX}{\mathbf{X}}
%-------------------------------------------------------------------

\begin{document}

\date{\today}

\title{\bf A linearized energy preserving finite element method
  for the dynamical incompressible magnetohydrodynamics equations}
\author{
  \setcounter{footnote}{0}
  Huadong~Gao
  \footnote{
    School of Mathematics and Statistics, 
    Huazhong University of Science and Technology, 
    Wuhan 430074, People¡¯s Republic of China.
    {\tt huadong@hust.edu.cn}.
    The work of the author was supported in part by a grant
    from the National Natural Science Foundation of China (NSFC) under
    grant No. 11501227.
  }
  \quad and \quad Weifeng~Qiu
  \footnote{ 
    Department of Mathematics,
    City University of Hong Kong, 83 Tat Chee Avenue, Hong Kong, China.
    {\tt weifeqiu@cityu.edu.hk}.
    The work of the author was supported by a grant
    from the Research Grants Council of the Hong Kong Special
    Administrative Region, China. (Project No. CityU 11304017)  
  }
}
\maketitle

\begin{abstract}
  We present and analyze a linearized
  finite element method (FEM) for the dynamical incompressible
  magnetohydrodynamics (MHD) equations.
  The finite element approximation is based on mixed conforming elements,
  where Taylor--Hood type elements are used for the Navier--Stokes equations
  and {N\'ed\'elec} edge elements are used for the magnetic equation.
  The divergence free conditions are weakly satisfied at the discrete level.
  Due to the use of {N\'ed\'elec} edge element,
  the proposed method is particularly suitable for problems defined on
  non-smooth and multi-connected domains.
  For the temporal discretization, we use a linearized scheme
  which only needs to solve a linear system at each time step.
  Moreover, the linearized mixed FEM is energy preserving.
  We establish an optimal error estimate under a very low
  assumption on the exact solutions and domain geometries.
  Numerical results which includes a benchmark lid-driven cavity
  problem are provided to show its effectiveness
  and verify the theoretical analysis.
  
  \vskip 0.2in
  \noindent{\bf Keywords:}
  incompressible MHD equations, energy preserving, 
  linearized methods, finite element method, error analysis.
\end{abstract}

{\small {\bf AMS subject classifications}. 65M12, 65M15, 65M60.}

\section{Introduction}
\setcounter{equation}{0}

In this paper, we consider the dynamical incompressible
magnetohydrodynamics (MHD) equations,
which is a coupled equation system of Navier--Stokes equations
of fluid dynamics and Maxwell's equations of electromagnetism
via Lorentz's force and Ohm's Law, as follows
\begin{empheq}[left=\empheqlbrace]{align} 
  & \frac{\partial \bu}{\partial t} - \frac{1}{R_e}\Delta \bu
  + \bu \cdot \nabla \, \bu
  + \nabla p - S_c \, \C \, \bB \, \times\bB =\mathbf{f},
  && \bm{x} \in \Omega,
  \label{pde-1}\\[4pt]
  & \frac{\partial \bB}{\partial t}
  + \frac{S_c}{R_m} \C \, (\C \, \bB) - S_c \C \, (\bu \times \bB)
  = \mathbf{0},
  && \bm{x} \in \Omega,
  \label{pde-2}\\[4pt]
  & \nabla \cdot \bu=0,
  && \bm{x} \in \Omega,
  \label{pde-3}\\[4pt]
  & \nabla \cdot \bB=0,
  && \bm{x} \in \Omega,
  \label{pde-4}
\end{empheq}
with boundary conditions
\begin{empheq}[left=\empheqlbrace]{align} 
  & \bu = \mathbf{0},
  \quad \bm{x} \in \partial \Omega,
  \label{bc-u} \\
  & \bB \cdot \n = 0, \quad
  \C \,\bB \times \n ={\mathbf{0}},
  \quad \bm{x} \in  \partial \Omega,
  \label{bc-mag}
\end{empheq}
and initial conditions
\begin{align}
  \bu(\bm{x},0) = \bu_0(\bm{x}),\quad
  \bB(\bm{x},0) = \bB_0(\bm{x}),
  \label{initial}
\end{align}
where $\D \, \bu_0(\bm{x}) = \D \, \bB_0(\bm{x}) = 0$.
In the above dynamical incompressible MHD system,
$\bu$ represents the velocity of the fluid flow,
$\bB$ represents magnetic field, $p$ represents the pressure
and $\mathbf{f}$ stands for the external body force term, respectively.
In this paper we assume $\Omega$ is a bounded Lipschitz polyhedral
domain in $\mathbb{R}^3$ (polygonal domain in $\mathbb{R}^2$),
which might not be convex or simply-connected.
The dynamical incompressible MHD equation system
is characterized by three parameters:
the hydrodynamic Reynolds number $R_e$,
the magnetic Reynolds number $R_m$ and the coupling number $S_c$.
Another commonly used boundary condition for equation \refe{pde-2} of $\bB$
is defined by
\begin{align}
& \bB \times \n = \mathbf{0}, \quad
  \quad \bm{x} \in  \partial \Omega.
  \label{bc-mag-alternate}
\end{align}
In addition, the boundary condition for $\bB$ might be of a mixed type,
i.e., \refe{bc-mag} is used on part of $\partial \Omega$,
while \refe{bc-mag-alternate} is used on the other part.
However, we shall mainly consider the boundary condition  \refe{bc-u}-\refe{bc-mag}
in this paper. 

The dynamical incompressible MHD equations have attracted an amount of attention
due to its important applications in modeling liquid
metals \cite{Gerbeau-LL,Moreau} and plasma physics \cite{Goedbloed-Poedts}.
We refer to the monograph \cite{Gerbeau-LL} by Gerbeau, Le Bris and Leli\`evre
as a summary of recent progress for the MHD equations,
which includes mathematical modeling, analysis and numerical methods.
Mathematical analyses of the dynamical MHD model can be found in
\cite{CorMar,Lin-Zhang,Schonbek-Soli,Sermane-Temam} and reference therein.
Global existence of weak solutions has been well established.
Sermane and Temam proved the existence and uniqueness of local strong solution
on regular domains \cite{Sermane-Temam}.
There have been numerous works on
numerical methods for the incompressible MHD equations,
see \cite{Badia-Codina-Planas, GreifSW, GunzburgerMP, Hiptmair_LMZ, HuMaXu,
  Marioni,  Phillips, Schotzau, Zhang-Yang-Bi}.
Due to the nonlinear coupling of the unknowns,
the divergence free constraints and the low regularity of the exact solutions,
it is a challenging task to design efficient numerical schemes for the
dynamical incompressible MHD equations.
Several nonlinear schemes have been suggested, see \cite{Hiptmair_LMZ,HuMaXu},
where the motivation for using nonlinear scheme is to
preserve the properties of the original equations as much as possible.
For example, energy conservation can be preserved at the discrete level
for nonlinear schemes studied in \cite{Hiptmair_LMZ,HuMaXu}.
However, a key observation on the two nonlinear terms 
\begin{align}
  - S_c \, \C \, \bB \, \times\bB \quad
  {\small \textrm{in \refe{pde-1}}},  \quad
  - S_c \C \, (\bu \times \bB) \quad
  {\small \textrm{in \refe{pde-2}}}, \quad
  \nn
\end{align}
tells that the weak formulation admit anti-symmetric structure.
That is, the sum of the corresponding terms in the weak formulation
\begin{align}
  - S_c(\C \, \bB \, \times\bB \, , \, \bv) , \quad
  - S_c (\bu \times \bB, \C \, \bC) , \quad
  \label{anti-symmetric-property}
\end{align}
vanishes if we take $\bv = \bu$ and $\bC = \bB$.
By using this property,
a careful linearization yields an energy preserving discretization,
see the FEM system \refe{fem-1}-\refe{fem-3}.
We shall mention that \cite{Prohl,Zhang-Yang-Bi}
also noticed this anti-symmetric structure in incompressible MHD equations.
Another issue is the constraint $\D \, \bB =0$.
For the ideal MHD flow problem,
not guaranteeing $\D \, \bB =0$ to round-off error
may lead to nonphysical solutions,
see the numerical report in \cite{Brackbill} by Brackbill and Barnes.
Motivated by this,
some attempts in developing divergence free numerical methods
can be found in \cite{Hiptmair_LMZ, HuMaXu}.
Hu, Ma and Xu suggested a mixed finite element method (FEM) in \cite{HuMaXu},
where the current $\mathbf{E} = \C \, \bB - \bu \times \bB$ was introduced
as a new variable.
The $\bH(\C)$ conforming {N\'ed\'elec} edge element is applied for the
discretization of $\mathbf{E}$.
By using $\bH(\D)$ conforming
Raviart--Thomas element to approximate $\bB$,
the authors in \cite{HuMaXu} proved that their scheme
is divergence free on each element.
It seems that the only disadvantage for this approach is
the expensive computational costs, where vector elements are heavily used
in the spatial discretization.
For problems defined on simply-connected domain,
another strategy to eliminate the divergence free constraint
is to introduce a new variable $\bA$ such that $\bB = \C \, \bA$
and then solve the new dynamical MHD system of $\bA$, $\bu$ and $p$,
see the scheme proposed in \cite{Hiptmair_LMZ} by Hiptmair et al..
However, the ideal MHD model in \cite{Brackbill} is essentially different with
the incompressible MHD equations \refe{pde-1}-\refe{initial}.
To the best knowledge of the authors, there are no numerical reports
which show that $\D \bB_h \neq 0$ leads to nonphysical solutions
for \refe{pde-1}-\refe{initial}.
An alternative and less expensive way to deal with this constraint
is to enforce the divergence free condition weakly.
For example, in \cite{Schotzau} the numerical solution $\bB_h$ satisfies
\[
\int_{\Omega} \, \bB_h \,\cdot\, \nabla v_h \, \mathrm{d}x=0, \quad
\forall v_h \in V_h(\Omega)
\]
where $V_h(\Omega)$ denotes a certain FE space.
Let us remark that this approach and its generalizations has been widely used,
see \cite{Badia-Codina-Planas, Codina, GreifSW, GunzburgerMP, HaslerSS, He,
Phillips, Prohl, Schotzau, Zhang-Yang-Bi}.
Finally, it should be noted that the regularity of $\bB$
on non-convex and non-smooth domains is lower than $\bH^1(\Omega)$.
Due to the $\C \C$ structure of magnetic equation \refe{pde-2},
it is well-known that on a general non-convex Lipschitz polyhedron
\cite{Amrouche-BDG,Costabel_Dauge},
$\bH(\D,\Omega) \cap \bH(\C, \Omega)$
is embedded in $\bH^s(\Omega)$ for $\frac{1}{2} < s < 1$ only.
However, conventional Lagrange FEMs for a scalar parabolic equation
require the regularity of the exact solution in $H^{1+s}(\Omega)$ with 
$s>0$ \cite{CLTW,CHou}.
Thus, analyses in \cite{He, Zhang-Yang-Bi} all assume that
the domain $\Omega$ is convex or smooth
to ensure the convergence of conventional Lagrange FEMs.
To overcome this difficulty,
the $\bH(\C)$ conforming {N\'ed\'elec} edge element
can solve for the $\C\,\C$ problem correctly on more general geometries,
which has attracted much attention in the mathematical society
and been successfully used in electromagnetics industry.

In this paper, we present a linearized mixed FEM
for the dynamical incompressible MHD equations.
Though the incompressible MHD equations \refe{pde-1}-\refe{initial}
introduce strongly nonlinear coupling between $\bu$ and $\bB$,
a careful linearization for $ - S_c \, \C \, \bB \, \times\bB$ in \refe{pde-1}
and $- S_c \C \, (\bu \times \bB)$ in \refe{pde-2}
yields an energy preserving scheme,
see the linearized FEM \refe{fem-1}-\refe{fem-3}
in section \ref{chap-scheme}.
The cancellation of these two nonlinear terms plays a key role in later
theoretical analysis. Moreover, the proposed scheme is linear.
At each time step, we only need to solve a linear system
which makes the proposed scheme very attractive in practical computations.
Limited work has been done for problems defined on non-convex domains
when $\bB$ is not in $\bH^1(\Omega)$.
By using {N\'ed\'elec} edge element space, we obtain optimal
error estimate for a linearized scheme
under relatively low regularity assumption on the exact solution.
In particular, the proposed scheme converges and the error analysis holds
for problems defined on non-smooth, non-convex and multi-connected domains.

The rest of this paper is organized as follows. 
In section \ref{chap-scheme}, we provide a linearized
mixed FEM for the incompressible MHD equations.
In section \ref{chap-analysis}, we present analysis of
the linearized scheme.
Numerical examples for both two- and three-dimensional models 
are given in section \ref{numer-section} to show the efficiency of our method.
Some concluding remarks are give in section \ref{numer-conclusion}.

\section{A conservative linearized mixed FEM}
\label{chap-scheme}
\setcounter{equation}{0}

\subsection{Preliminaries}
\label{preliminaries}

We now introduce notions for some standard Sobolev spaces.
For any two functions $u$, $v \in {L}^{2}(\Omega)$, 
we denote the ${L}^{2}(\Omega)$ inner product and norm by
\begin{equation}
  (u, \,v)  = \int_{\Omega} u(x) \cdot v(x) \, {\mathrm{d}} x, 
  \qquad {\left \| u \right \|_{L^2}} = (u,u)^{\frac{1}{2}} \, ,
  \nn
\end{equation}
where $\cdot$ denotes the inner product in case of vectorial functions.
Let $W^{k,p}(\Omega)$ be the Sobolev space defined on $\Omega$, 
and by conventional notations, $H^{k}(\Omega) := W^{k,2}(\Omega)$,
$\accentset{\circ}{H}^{k}(\Omega) := \accentset{\circ}{W}^{k,2}(\Omega)$.
Let $\bH^{k}(\Omega) = [H^{k}(\Omega)]^{d}$ 
be a vector-valued Sobolev space,
where $d$ is the dimension of $\Omega$.
For a positive real number $s = k + \theta$ with $0 < \theta < 1$,
we define $H^s(\Omega)= (H^k, H^{k+1})_{[\theta]}$ by the complex interpolation,
see \cite{Bergh_Lofstrom}.
For $(\bu, p)$, we shall introduce
\begin{align}
  & \accentset{\circ}{\bH}^{1}(\Omega)
  = \left \{ \bu \in \bH^{1}(\Omega), \quad \bu|_{\partial \Omega}
  = \mathbf{0}  \right \} \,,
  \quad
  & L_0^2 = \left\{ u \in L^2, \quad  (u, 1)=0 \right \}.
\end{align}
For the magnetic field $\bB$, we denote 
\begin{align}
  {\bH}(\C) = \left\{
  \bB \, \big| \, \bB \in \mathbf{L}^{2}(\Omega),
  \C \, \bB  \in \mathbf{L}^2(\Omega) \right\} \,
  \textrm{with $\| \bB \|_{{\bH}(\C)} = \left(\| \bB \|_{{L}^2}^2 
    + \|\C \, \bB\|_{{L}^2}^2  \right)^{\frac{1}{2}} $} .
  \nn
\end{align}
and its dual space ${{\bH}(\C)}'$ with norm 
\begin{align}
  \left \| \mathbf{B} \right \|_{{{\bH}(\C)}'}
  := \sup_{\mathbf{w} \in {{\bH}(\C)}}
  \frac{(\mathbf{B} \, , \, \mathbf{w})}
       {\left \| \mathbf{w} \right \|_{{\bH}(\C)}}\,.
       \nn
\end{align}
Moreover, we denote 
\begin{align}
  {\bH}(\D) = \left\{
  \bA \, \big| \, \bA \in \mathbf{L}^{2}(\Omega),
  \D \, \bA  \in L^2(\Omega) \right\} \,
  \textrm{with $\|\bA\|_{{\bH}(\D)} = \left(\|\bA\|_{{L}^2}^2 
    + \|\D \, \bA\|_{{L}^2}^2  \right)^{\frac{1}{2}} $}
  \nn
\end{align}

Now we introduce some notations for the numerical methods.
Let $\mathcal{T}_{h}$ be a quasi-uniform
tetrahedral partition of $\Omega$ with $\Omega = \cup_{K} \Omega_{K}$.
The mesh size is denoted by  $h = \max_{\Omega_{K} \in \mathcal{T}_{h}} 
\{ \mathrm{diam} \, \Omega_{K}\}$.
To approximate $(\bu, p)$, we use the finite element pair 
\[
\bX_{h} \subset \accentset{\circ}{\bH}^{1}(\Omega)
\qquad \textrm{and} \qquad
M_h  \subset {L}^{2}_0(\Omega),
\]
which satisfies the discrete inf-sup condition:
there exits a constant $\beta > 0$ such that
\begin{align}
  \inf_{0 \neq q_h \in M_h} \,
  \sup_{\mathbf{0} \neq \bv_h \in \bX_{h}} \,
  \frac{(q_h \,,\, \D \, \bv_h)}{\|\bv_h\|_{\bH^1} \, \|q_h\|_{L^2}}
  \geq \beta_1,
  \label{LBB-ns}
\end{align}
where $\beta_1$ depends on $\Omega$ only.
In this paper, we choose the popular generalized Taylor--Hood
FE space $\bX_{h}^{k+1} \times M_h^{k}$ with $k \ge 1$ for the approximation
of $(\bu, p)$, see \cite{Boffi-Brezzi-Fortin, Girault-Raviart}.
Here $\bX_{h}^{k+1}$ is the $(k+1)$-th order vectorial Lagrange FE
space and $M_h^{k}$ represents the $k$-th order scalar Lagrange FE subspace
of $L_0^2(\Omega)$, respectively.
We shall also introduce $V_h^{k}$ to be the
$k$-th order scalar Lagrange FE subspace of $H^1(\Omega)$.
To approximate $\bB$, we denote by ${\bQ}_{h}^{k}$ the $k$-th 
order first type {N\'ed\'elec}  FE subspace of 
$\bH(\C)$, where the case $k=1$ corresponds to 
the lowest order {N\'ed\'elec}  edge element ($6$ dofs). 
We denote by $\Pi_h$ a general projection operator on $\bX_{h}^{k}$,
$M_{h}^{k}$, $\bQ_{h}^{k}$ and $V_{h}^{k}$. The approximation
properties of $\Pi_h$ are summarized in the following lemma.

%=======================================
%    interpolation results
%=======================================
\begin{lemma}
  \label{lemma-interpolation}
  By noting the approximation properties of the finite element spaces
  $\bX_{h}^{k}$, $M_{h}^{k}$(or $V_{h}^{k}$)  and  $\bQ_{h}^{k}$,
  we denote by $\Pi_h$ the projection operator on  $\bX_{h}^{k}$,
  $M_{h}^{k}$, and $\bQ_{h}^{k}$, satisfying 
  \begin{equation}
  \left \{
  \begin{array}{ll}
  {\left\|\bm{\omega} - \Pi_h \bm{\omega} \right\|}_{L^{2}}
  \le C h^s {\left\| \bm{\omega} \right\|}_{\bH^{s}} \, , 
  & 0 < s \le k+1 \, ,
  \\[4pt]
  {\left\|\omega - \Pi_h \omega \right\|}_{L^{2}}
  \le C h^s {\left\| \omega \right\|}_{H^{s}} \, , 
  & 0 < s \le k+1 \, ,
  \\[4pt]
  {\left\|\bm{\chi}  - \Pi_h \bm{\chi}\right\|}_{L^{2}} +
  {\left\|\C \, (\bm{\chi} - \Pi_h \bm{\chi})\right\|}_{L^{2}} 
  \le C h^s ({\left\|\bm{\chi}\right\|}_{H^{s}} +
  {\left\|\C \, \bm{\chi}\right\|}_{H^{s}}), 
  & \frac{1}{2} < s \le k \, .
  \end{array}
  \right.
  \label{interpolate}
  \end{equation}
\end{lemma}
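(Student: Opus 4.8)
The plan is to assemble \refe{interpolate} from standard finite element approximation theory, since each of the three estimates is classical; the content of the lemma is merely to fix one symbol $\Pi_h$ for the several projections. The common mechanism in all three cases is the same: reduce to a local estimate on the reference element, apply the Bramble--Hilbert lemma there for integer smoothness exponents, transfer the estimate back by the appropriate scaling (affine for the Lagrange elements, covariant Piola for the {N\'ed\'elec} elements), sum over the quasi-uniform mesh $\mathcal{T}_h$, and finally recover the non-integer exponents $s$ by complex interpolation of Sobolev spaces \cite{Bergh_Lofstrom}.

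For the scalar Lagrange spaces $M_h^k$ and $V_h^k$ --- and, componentwise, for the vector space $\bX_h^k$ --- I would take $\Pi_h$ to be a Scott--Zhang type quasi-interpolation operator rather than the nodal interpolant, so that it is already well defined on $H^s(\Omega)$ for all $s>0$ (the nodal interpolant would require $s>d/2$), reproduces $\mathbb{P}_k$ elementwise, and is locally $H^1$-stable. Then on each $\Omega_K$ the Bramble--Hilbert lemma on the reference element gives a local bound of order $h^m$ in terms of $\|\omega\|_{H^m}$ over the patch of elements meeting $\Omega_K$, for every integer $m$ with $0\le m\le k+1$; summing over $K$ and interpolating between consecutive integers yields the first two lines of \refe{interpolate}. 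This part is entirely routine and is carried out in \cite{Boffi-Brezzi-Fortin, Girault-Raviart}.

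For the edge element space $\bQ_h^k$ I would take $\Pi_h$ to be the first type {N\'ed\'elec} interpolant defined by the tangential edge moments together with the face and interior moments. Here the hypothesis $s>\frac{1}{2}$ is essential rather than technical: it is exactly what makes the tangential trace on edges, hence the defining functionals, bounded, provided one exploits the combined regularity $\bm{\chi}\in\bH^s(\Omega)$ and $\C\,\bm{\chi}\in\bH^s(\Omega)$ (see \cite{Amrouche-BDG}). With $\Pi_h$ so defined, and using its commuting relation with the Raviart--Thomas interpolant, the Piola scaling together with the Bramble--Hilbert lemma applied to both $\bm{\chi}$ and $\C\,\bm{\chi}$ gives, for integer $m$ with $1\le m\le k$,
\[
{\|\bm{\chi}-\Pi_h\bm{\chi}\|}_{L^2(\Omega_K)}+{\|\C\,(\bm{\chi}-\Pi_h\bm{\chi})\|}_{L^2(\Omega_K)}\le C h^m\big({\|\bm{\chi}\|}_{H^m(\Omega_K)}+{\|\C\,\bm{\chi}\|}_{H^m(\Omega_K)}\big),
\]
and summing over $K$ and interpolating in $s$ produces the third line.

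I expect the only genuine obstacle to be the low-regularity end of this last estimate, the range $\frac{1}{2}<s\le 1$, where the canonical {N\'ed\'elec} interpolant is only borderline well defined. The way around it is to establish the bound first for $s$ slightly above $\frac{1}{2}$ --- either through the refined interpolation operators adapted to $\bH^s(\C)$ of \cite{Amrouche-BDG}, or by replacing $\Pi_h$ in that regime by a quasi-interpolation or smoothed projection of Christiansen--Winther type that is bounded on all of $\bH(\C)$ --- and then to interpolate between that estimate and the $s=1$ estimate to cover $\frac{1}{2}<s\le 1$, after which the integer-order estimates handle $1\le s\le k$. Everything else reduces to the standard scaling-plus-Bramble--Hilbert argument, so no further difficulty is anticipated.
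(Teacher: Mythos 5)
Your proposal is correct and follows exactly the route the paper takes: the paper offers no proof of its own but simply cites \cite{BS} for the Lagrange estimates and \cite[Theorem 5.41]{Monk}, \cite{Alonso_Valli} for the N\'ed\'elec estimate, and your scaling-plus-Bramble--Hilbert argument with Sobolev interpolation (and the quasi-interpolant / commuting-diagram fixes at low regularity) is precisely the standard content of those references. No discrepancy to report.
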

The interpolation results for Lagrange element space can be found in \cite{BS}.
We refer to \cite[Theorem 5.41]{Monk} and \cite{Alonso_Valli} for the proof of
the interpolation onto the {N\'ed\'elec} edge element space $\bQ_{h}^{k}$.
For the time discretization,
let ${\left \{ t_{n} \right \}}_{n=0}^{N}$ be a uniform partition
in the time direction with the step size $\tau = \frac{T}{N}$,
and let $u^n = u(\cdot, n \tau)$. 
For a sequence of functions $\{U^{n}\}_{n=0}^{N}$ defined on $\Omega$, 
we denote 
\begin{eqnarray}
  {D_{\tau}} U^{n} = \frac{U^{n}-U^{n-1}}{\tau}, \quad
  \overline{U}^{n} = \frac{U^{n}+U^{n-1}}{2}, \quad
\textrm{for $n=1$, $2$, $\ldots$, $N$}.
\nn
\end{eqnarray}

\subsection{A linearized mixed FEM}
\label{a-mixed-fem}

With the above notations, a linearized backward Euler mixed FEM
for dynamical incompressible MHD equations \refe{pde-1}-\refe{bc-mag}
is to look for
$(\bu_h^n, \bB_h^n, p_h^n) \in \bX_h^{k+1} \times \bQ_h^{\widehat{k}} \times M_h^k$,
such that for any
$(\bv_h^n, \bC_h^n, q_h^n) \in \bX_h^{k+1} \times \bQ_h^{\widehat{k}} \times M_h^k$
\begin{empheq}[left=\empheqlbrace]{align} 
  & (D_\tau \bu_h^n, \bv_h)
  + \frac{1}{R_e} (\,\nabla\, \overline{\bu}_h^n \,,\, \nabla \bv_h)
  + \frac{1}{2} \big[ ( \bu_h^{n-1} \cdot \nabla \, \overline{\bu}_h^n, \bv_h)
    - (\bu_h^{n-1} \cdot \nabla \, \bv_h \,,\, \overline{\bu}_h^n ) \big]
  \nn \\
  & \qquad \qquad \qquad \qquad \qquad
  - ({p}_h^n, \nabla \cdot \bv_h)
  - S_c (\C \, \overline{\bB}_h^{n} \, \times \bB_h^{n-1} \, , \, \bv_h)
  = (\mathbf{f}^{n}, \bv_h),
  \label{fem-1}\\[3pt]
  & ( D_\tau \bB_h^n , \bC_h )
  + \frac{S_c}{R_m} (\C \, \overline{\bB}_h^n, \C \, \bC_h)
  - S_c (\overline{\bu}_h^n \times \bB_h^{n-1}, \C \, \bC_h) = 0\,,
  \label{fem-2} \\[3pt]
  & (\nabla \cdot \overline{\bu}_h^n \, , \, q_h) = 0,
  \label{fem-3}
\end{empheq}
where $k$, $\widehat{k} \ge 1$.
At the initial time step,  $\bu_h^n = \Pi_h \bu_0$ and $\bB_h^n = \Pi_h \bB_0$.
We give several remarks concerning the proposed scheme.

\begin{remark}
  \label{remark-matrix-form}
  The above linearized mixed FEM \refe{fem-1}-\refe{fem-3}
  can be written in matrix form as
  \begin{align}
    \left[
      \begin{array}{ccc}
        \frac{1}{\tau} \, {\mathsf{M}_1} + \frac{1}{2 R_e} \mathsf{K}_1
        + \frac{1}{2}\mathsf{N}_1(\bu_h^{n-1})
        & \mathsf{B} & \frac{S_c}{2}\, \mathsf{N}_2(\bB_h^{n-1}) \\[8pt]
        \mathsf{B}^T & 0 & 0  \\[8pt]
        \frac{S_c}{2} \, \mathsf{N}_2^T(\bB_h^{n-1}) & 0 & 
        \frac{1}{\tau} \, {\mathsf{M}_2} + \frac{S_c}{2 R_m} \mathsf{K}_2 
      \end{array}
      \right]
    \left[
      \begin{array}{c}
        \bu_h^{n} \\[9pt]
        p_h^{n} \\[9pt]
        \bB_h^{n}
      \end{array}
      \right]
    := {\mathsf{A}} {\mathsf{x}}
    \, = \,{\mathsf{b}}\,,
    \label{matrix-form}
  \end{align}
  with $(p_h^{n},1)=0$.
  In terms of basis functions $\{\phi\}_{i=1}^{N_h}$,
  the block matrices in \refe{matrix-form} are generated by
  \begin{align}
  \begin{array}{rl}
  \{\mathsf{M}_1\}_{i,j}: & 
  ({\phi}_j, {\phi}_i),   
  \quad {\phi}_i, {\phi}_j \in \bX_{h}^{k} , \\[4pt]
  \{\mathsf{K}_1\}_{i,j}: & 
  (\nabla {\phi}_j, \nabla{\phi}_i),   
  \quad {\phi}_i, {\phi}_j \in \bX_{h}^{k} , \\[4pt]
  \{\mathsf{N}_1\}_{i,j}:  & 
  \frac{1}{2} \big[ ( \bu_h^{n-1} \cdot \nabla \, \phi_j, \phi_i)
    - (\bu_h^{n-1} \cdot \nabla \, \phi_i, \phi_j ) \big],
  \quad {\phi}_i, {\phi}_j \in \bX_{h}^{k} , \\[4pt]
  \mathsf{B}_{i,j}:  & 
  -( \phi_j \, \nabla \cdot \phi_i),
  \qquad \phi_j \in {W}_{h}^{k}, \quad {\phi}_i \in \bX_{h}^{k} ,\\[4pt]
  \{\mathsf{N}_2\}_{i,j}:  &
  -(\C \, \phi_j \, \times \, \bB_h^{n-1}, \phi_i),
  \qquad {\phi}_j \in {\bQ}_h^k,
  \quad {\phi}_i \in \bX_{h}^{k} ,\\[4pt]
  \{\mathsf{M}_2\}_{i,j}: & 
  ({\phi}_j, {\phi}_i),   
  \quad {\phi}_i, {\phi}_j \in {\bQ}_{h}^{\widehat{k}} ,\\[4pt]
  \{\mathsf{K}_2\}_{i,j}: & 
  (\C \, {\phi}_j, \C \, {\phi}_i),   
  \quad {\phi}_i, {\phi}_j \in {\bQ}_{h}^{\widehat{k}} .
  \end{array} 
  \nn
  \end{align}
  To prove the existence and uniqueness of the linearized mixed FEM,
  it suffices to show that
  $\mathsf{A} \mathsf{x} = \mathsf{0}$ admits zero solution only.
  Assuming $\mathsf{A} \mathsf{x} = \mathsf{0}$, then we have
  \begin{align}
    0 = \mathsf{x}^T \mathsf{A} \mathsf{x} =
    \frac{1}{\tau}\|\bu_h^n\|_{L^2}^2
    + \frac{1}{2} R_e^{-1} \|{\bu}_h^n\|_{L^2}^2
    + \frac{1}{\tau}\|\bB_h^n\|_{L^2}^2
    + \frac{1}{2} S_c R_m^{-1} \|\C \, {\bB}_h^n\|_{L^2}^2 \,,
    \label{unique-solution}
  \end{align}
  from which $\bu_h^n = \bB_h^n = \mathbf{0}$ follows directly.
  By using the inf-sup conditions \refe{LBB-ns},
  one can deduce that $p_h^n=0$, which immediately leads to
  the fact that $\mathsf{A}$ is invertible. 
\end{remark}

\begin{remark}
  In the above scheme, we only consider the homogeneous boundary condition
  \refe{bc-u}-\refe{bc-mag}. However, it should be noted that the proposed
  scheme is able to deal with the mixed type boundary condition for $\bB$
  conveniently.
  For instance, we assume $\partial \Omega = \Gamma_1 \cup \Gamma_2$ where
  $\Gamma_1 \cap \Gamma_2 = \emptyset$.
  The boundary condition for $\bB$ is set to be
  $\bB \times \n = \mathbf{0}$ on $\Gamma_1$, while $\bB \cdot \n = {0}$
  and $\C \, \bB \times \n = \mathbf{0}$ on $\Gamma_2$.
  In this case, the FE space for $\bB$ shall be
  $\{\bB_h \in {\bQ}_{h}^{\widehat{k}} \, | \,
  \bB_h \times \mathbf{n} = \mathbf{0} \, \textrm{on ${\Gamma_1}$} \}$.
  The stability and error analyses also
  hold for problems with mixed type boundary conditions.
\end{remark}

\begin{remark}
  The proposed scheme \refe{fem-1}-\refe{fem-3}
  satisfies a weakly divergence free property, namely,
  for any $s_h \in V_h^k$, taking $\bC_h^n = \nabla s_h$ in \refe{fem-2} gives
  \begin{align}
    ( D_\tau \bB_h^n ,  \nabla s_h ) = 0, \quad
    \textrm{for $n=1$, $\ldots$, $N$,}
    \nn
  \end{align}    
  which in turn leads to
  \begin{align}
    ( \bB_h^n ,  \nabla s_h ) = 0, \quad
    \forall s_h \in V_h^k
    \label{weakly-div0}
  \end{align}
  provided that $( \bB_h^0 ,  \nabla s_h ) = 0$.
  Here, we have used the fact that $\nabla V_h^k \subset {\bQ}_{h}^{k}$.
\end{remark}

We present the Gagliardo--Nirenberg inequality and the discrete Gronwall's
inequality in the following lemmas which will be
frequently used in our proofs.
%=======================================
%    Gagliardo--Nirenberg inequality
%=======================================
\begin{lemma}
\label{GN}
{ (Gagliardo--Nirenberg inequality \cite{Nirenberg}):
Let $u$ be a function defined on $\Omega$ in $\mathbb{R}^d$
and $\partial ^{s} u$ be any partial derivative of $u$ of order $s$, then
\begin{equation}
\|\partial ^{j} u\|_{L^p}
\le C \|\partial^{m} u\|_{L^k}^{a} \, \|u\|_{L^q}^{1-a}
+ C \|u\|_{L^q},
\nn
\end{equation}
for $0 \le j < m$ and $\frac{j}{m} \le a \le 1$ with
\[
\frac{1}{p} =
\frac{j}{d} + a \left( \frac{1}{r} - \frac{m}{d}\right)
+(1-a) \frac{1}{q} \, ,
\]
except $1 < r < \infty$ and $m-j-\frac{d}{r}$ is a non-negative
integer, in which case the above estimate holds only for
$\frac{j}{m} \le a < 1$. }
\end{lemma}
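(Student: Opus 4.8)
\noindent\textit{Proof proposal.} The statement is the classical Gagliardo--Nirenberg interpolation inequality, quoted here from Nirenberg, so the natural plan is to recall his original route: reduce the estimate to the model case $\Omega=\mathbb{R}^d$, prove it there by a one--dimensional slicing argument, and then bootstrap in the order of differentiation. Since $\Omega$ is a bounded Lipschitz domain it carries a bounded (Stein) extension operator $E$ sending $u$ to $Eu\in W^{m,r}(\mathbb{R}^d)\cap L^q(\mathbb{R}^d)$ with $Eu=u$ on $\Omega$ and operator norms depending only on $\Omega$; applying the whole--space inequality to $Eu$ and restricting to $\Omega$ yields the claim. The additive term $C\|u\|_{L^q}$ is exactly the price of working on a bounded domain, where a scaling argument forbids the pure multiplicative form, and it absorbs the part of the full--space norms of $Eu$ not controlled by a single homogeneous seminorm on $\Omega$. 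By density one may henceforth take $u\in C_c^\infty(\mathbb{R}^d)$.

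For the base case $j=0$, $m=1$, one uses that $|u(x)|\le\int_{\mathbb{R}}|\partial_i u|\,dt$ along the $i$--th coordinate line through $x$; multiplying these $d$ pointwise bounds, integrating successively in $x_1,\dots,x_d$, and invoking the generalized H\"older inequality at each integration gives the Gagliardo--Nirenberg--Sobolev estimate $\|u\|_{L^{d/(d-1)}}\le\prod_i\|\partial_i u\|_{L^1}^{1/d}\le\|\nabla u\|_{L^1}$. Substituting $|u|^{\gamma}$ for $u$ with a suitably chosen $\gamma>1$, so that the new left--hand exponent is $p$, and applying H\"older to the right--hand side $\gamma\,\||u|^{\gamma-1}\nabla u\|_{L^1}$ upgrades this to $\|u\|_{L^p}\le C\|\nabla u\|_{L^r}^a\|u\|_{L^q}^{1-a}$ with $\tfrac1p=a(\tfrac1r-\tfrac1d)+(1-a)\tfrac1q$, which is precisely the asserted inequality in the case $j=0$, $m=1$.

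The higher--order cases $j\ge1$ follow by applying the first--order estimate to successive derivatives of $u$ and combining it with the standard interpolation inequalities $\|\nabla^{\ell}u\|\lesssim\|\nabla^{m}u\|^{\ell/m}\|u\|^{1-\ell/m}$ for intermediate derivatives (themselves a special case obtainable directly from the same one--dimensional device); iterating these collapses every term onto the two norms $\|\partial^{m}u\|_{L^r}$ and $\|u\|_{L^q}$, with the exponent $a$ forced by the dimensional balance displayed in the statement. Non--integer values of $j$, $m$, $s$ are then recovered by complex interpolation between consecutive integer orders, consistently with the definition of $H^s$ adopted in the paper.

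The main point requiring care is the exceptional configuration flagged in the statement, namely $1<r<\infty$ with $m-j-d/r$ a nonnegative integer: there the endpoint $a=1$ would amount to a Sobolev embedding into $L^\infty$ at a critical index, which fails, so one must either restrict to $a<1$ or retain the $C\|u\|_{L^q}$ term. Verifying that the chain of H\"older steps in the iteration never silently crosses such a forbidden endpoint, and bookkeeping the admissible range of exponents at each stage, is the delicate part; everything else reduces to repeated use of H\"older's inequality and the scaling relation.
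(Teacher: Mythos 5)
The paper does not prove this lemma at all: it is quoted verbatim as a classical result with a citation to Nirenberg, so there is no in-paper argument to compare against. Your outline is the standard route to the classical theorem and is sound as a sketch: Stein extension from the Lipschitz domain to $\mathbb{R}^d$ (which is exactly what produces the additive $C\|u\|_{L^q}$ term), the slicing/Loomis--Whitney argument with the $|u|^{\gamma}$ substitution for the case $j=0$, $m=1$, and then iteration combined with the intermediate-derivative interpolation inequalities for higher orders. Two caveats. First, the parts you correctly flag as delicate --- the full higher-order iteration over the whole admissible range $\frac{j}{m}\le a\le 1$ and the verification of the exceptional case where $m-j-\frac{d}{r}$ is a nonnegative integer --- are in fact the bulk of Nirenberg's proof, not a routine bookkeeping step, so as written your argument is an outline rather than a complete proof; that is acceptable here only because the lemma is being cited, not proved. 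Second, your closing remark about recovering non-integer $j$, $m$, $s$ by complex interpolation is extraneous: in the statement $s$ is merely a label for the order of a generic partial derivative, and the inequality as used in the paper involves only integer-order derivatives, so nothing non-integer needs to be recovered. (You might also note, as a matter of reading the statement, that the $L^k$ on the right-hand side is evidently a typo for $L^r$, since $r$ is the exponent appearing in the dimensional balance.)
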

%======================================
%---------- Discrete Gronwall's inequality
%======================================
\begin{lemma}
\label{gronwall} Discrete Gronwall's inequality
{\cite{Heywood_Rannacher}} : Let $\tau$, $B$ and $a_{k}$, $b_{k}$,
$c_{k}$, $\gamma_{k}$, for integers $k \geq 0$, be non-negative
numbers such that
\[
a_{J} + \tau \sum_{k=0}^{J} b_{k} \leq \tau \sum_{k=0}^{J}
\gamma_{k} a_{k} + \tau \sum_{k=0}^{J} c_{k} + B \, , \quad
\mathrm{for } \quad J \geq 0 \, ,
\]
suppose that $\tau \gamma_{k} < 1$, for all $k$,
and set $\sigma_{k}=(1-\tau \gamma_{k})^{-1}$. Then
\[
a_{J} + \tau \sum_{k=0}^{J} b_{k} \leq  \exp(\tau \sum_{k=0}^{J}
\gamma_{k} \sigma_{k}) (\tau \sum_{k=0}^{J} c_{k} + B) \, , \quad
\mathrm{for } \quad J \geq 0 \, .
\]
\end{lemma}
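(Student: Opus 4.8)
\noindent\emph{Proof proposal.} The plan is to argue by induction on $J$, reducing the whole statement to two elementary scalar facts. To lighten notation set $G_J := \tau\sum_{k=0}^{J} c_k + B$, which is nondecreasing in $J$. First I would isolate the $k=J$ term on the right-hand side of the hypothesis; since $\tau\gamma_J<1$ we have $\sigma_J=(1-\tau\gamma_J)^{-1}\ge 1$, so multiplying through by $\sigma_J$ and discarding the nonnegative factor on the $b_k$-sum gives the basic recursion
\[
a_J + \tau\sum_{k=0}^{J} b_k \;\le\; \sigma_J\Big(\tau\sum_{k=0}^{J-1}\gamma_k a_k + G_J\Big),
\]
and in particular, dropping the $b_k$-sum, $a_J \le \sigma_J\big(\tau\sum_{k=0}^{J-1}\gamma_k a_k + G_J\big)$.

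Next I would prove, by induction on $J$, the auxiliary estimate
\[
a_J \;\le\; \sigma_J \exp\Big(\tau\sum_{k=0}^{J-1}\gamma_k\sigma_k\Big)\, G_J ,
\]
the empty sum being $0$, so the base case $a_0\le\sigma_0 G_0$ is exactly the recursion at $J=0$. For the inductive step I would substitute this bound for $a_0,\dots,a_{J-1}$ into the recursion, use the monotonicity $G_k\le G_J$ to factor out $\sigma_J G_J$, and reduce matters to the scalar inequality
\[
1 + \tau\sum_{k=0}^{J-1}\gamma_k\sigma_k \exp\Big(\tau\sum_{i=0}^{k-1}\gamma_i\sigma_i\Big) \;\le\; \exp\Big(\tau\sum_{k=0}^{J-1}\gamma_k\sigma_k\Big).
\]
Writing $d_k=\tau\gamma_k\sigma_k\ge0$ and $D_m=\sum_{k=0}^{m}d_k$ with $D_{-1}=0$, this follows by telescoping: $e^{D_{J-1}}-1=\sum_{k=0}^{J-1} e^{D_{k-1}}\big(e^{d_k}-1\big)\ge\sum_{k=0}^{J-1} d_k e^{D_{k-1}}$, using $e^{d_k}-1\ge d_k$.

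Finally I would feed the auxiliary estimate back into the basic recursion: bounding each $a_k$ for $k\le J-1$ by $\sigma_k\exp(\tau\sum_{i=0}^{k-1}\gamma_i\sigma_i)G_J$ and invoking the same telescoped inequality yields $a_J+\tau\sum_{k=0}^{J} b_k\le \sigma_J\exp(\tau\sum_{k=0}^{J-1}\gamma_k\sigma_k)G_J$. The argument then closes with the observation $\sigma_J=\frac{1}{1-\tau\gamma_J}=1+\tau\gamma_J\sigma_J\le e^{\tau\gamma_J\sigma_J}$, which absorbs the stray factor $\sigma_J$ into the exponent and produces exactly $\exp(\tau\sum_{k=0}^{J}\gamma_k\sigma_k)\,G_J$, as claimed. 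I do not anticipate a genuine obstacle here: the analytic content is entirely contained in $1+x\le e^x$ and its telescoped consequence above, and the only thing demanding care is lining up the $\sigma_k$-weights across the induction so that no index is dropped or double-counted.
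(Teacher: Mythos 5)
The paper does not prove this lemma at all; it is quoted verbatim from Heywood--Rannacher \cite{Heywood_Rannacher}, so there is no in-paper argument to compare against. Your proof is correct and complete: the basic recursion obtained by isolating the $k=J$ term and multiplying by $\sigma_J$, the inductive auxiliary bound $a_J \le \sigma_J \exp(\tau\sum_{k=0}^{J-1}\gamma_k\sigma_k)G_J$ via the telescoping inequality $1+\sum_k d_k e^{D_{k-1}} \le e^{D_{J-1}}$, and the final absorption of $\sigma_J = 1+\tau\gamma_J\sigma_J \le e^{\tau\gamma_J\sigma_J}$ all check out, and this is essentially the standard argument from the cited reference.
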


\section{Analysis of the linearized mixed FEM}
\label{chap-analysis}
\setcounter{equation}{0}

\subsection{Stability Analysis}

\begin{theorem}
\label{thm-stability-pde}
For any $(\bu, p,\bB)$ that satisfy the dynamical incompressible
MHD equations \refe{pde-1}-\refe{bc-mag}, the following estimate holds
\begin{align}
  \frac{1}{2} \frac{d}{d t} \|\bu\|_{L^2}^2
  + \frac{1}{2} \frac{d}{d t} \|\bB\|_{L^2}^2  
  + \frac{1}{R_e} \|\nabla \bu\|_{L^2}^2
  + \frac{S_c}{R_m} \|\C \, \bB\|_{L^2}^2
  = (\mathbf{f},\bu).
  \label{energy-estimate-pde}
\end{align}
\end{theorem}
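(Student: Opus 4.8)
The plan is to carry out the classical energy argument: test \refe{pde-1} with $\bu$ and \refe{pde-2} with $\bB$, integrate over $\Omega$, and then add the two resulting identities, relying on the boundary conditions \refe{bc-u}--\refe{bc-mag} and the anti-symmetric structure \refe{anti-symmetric-property} to kill all the remaining cross terms. First, testing \refe{pde-1} with $\bu$, the time-derivative term produces $\frac{1}{2}\frac{d}{dt}\|\bu\|_{L^2}^2$; integrating the viscous term by parts and using $\bu|_{\partial\Omega}=\mathbf{0}$ gives $-\frac{1}{R_e}(\Delta\bu,\bu)=\frac{1}{R_e}\|\nabla\bu\|_{L^2}^2$ with no boundary contribution; the convection term satisfies $(\bu\cdot\nabla\,\bu,\bu)=\frac12\int_\Omega \bu\cdot\nabla|\bu|^2\,\mathrm{d}x=-\frac12\int_\Omega (\D\,\bu)|\bu|^2\,\mathrm{d}x+\frac12\int_{\partial\Omega}|\bu|^2(\bu\cdot\n)\,\mathrm{d}s=0$ by \refe{pde-3} and \refe{bc-u}; and the pressure term gives $(\nabla p,\bu)=-(p,\D\,\bu)=0$ again by \refe{pde-3}. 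So from \refe{pde-1} we are left with $\frac12\frac{d}{dt}\|\bu\|_{L^2}^2+\frac1{R_e}\|\nabla\bu\|_{L^2}^2-S_c(\C\,\bB\times\bB,\bu)=(\bbf,\bu)$.

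Next, testing \refe{pde-2} with $\bB$, the time-derivative term gives $\frac12\frac{d}{dt}\|\bB\|_{L^2}^2$. For the curl–curl term I would use the integration-by-parts identity $\int_\Omega \C\,\mathbf{F}\cdot\mathbf{G}\,\mathrm{d}x=\int_\Omega \mathbf{F}\cdot\C\,\mathbf{G}\,\mathrm{d}x+\int_{\partial\Omega}(\mathbf{F}\times\n)\cdot\mathbf{G}\,\mathrm{d}s$ with $\mathbf{F}=\C\,\bB$, $\mathbf{G}=\bB$; the boundary term $\int_{\partial\Omega}(\C\,\bB\times\n)\cdot\bB\,\mathrm{d}s$ vanishes by the second condition in \refe{bc-mag}, leaving $\frac{S_c}{R_m}(\C\,(\C\,\bB),\bB)=\frac{S_c}{R_m}\|\C\,\bB\|_{L^2}^2$. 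For the induction term, the same identity with $\mathbf{F}=\bu\times\bB$, $\mathbf{G}=\bB$ produces the boundary term $\int_{\partial\Omega}((\bu\times\bB)\times\n)\cdot\bB\,\mathrm{d}s$, which vanishes since $\bu=\mathbf{0}$ on $\partial\Omega$ by \refe{bc-u}; hence $-S_c(\C\,(\bu\times\bB),\bB)=-S_c(\bu\times\bB,\C\,\bB)$. So from \refe{pde-2} we obtain $\frac12\frac{d}{dt}\|\bB\|_{L^2}^2+\frac{S_c}{R_m}\|\C\,\bB\|_{L^2}^2-S_c(\bu\times\bB,\C\,\bB)=0$.

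Finally I add the two identities. The only terms not yet accounted for are $-S_c(\C\,\bB\times\bB,\bu)$ and $-S_c(\bu\times\bB,\C\,\bB)$, and the key step is that their sum vanishes: by the scalar triple-product identity $(\mathbf a\times\mathbf b)\cdot\mathbf c=-(\mathbf c\times\mathbf b)\cdot\mathbf a$ applied pointwise with $\mathbf a=\C\,\bB$, $\mathbf b=\bB$, $\mathbf c=\bu$, we have $(\bu\times\bB)\cdot\C\,\bB=-(\C\,\bB\times\bB)\cdot\bu$, so $-S_c(\C\,\bB\times\bB,\bu)-S_c(\bu\times\bB,\C\,\bB)=0$, which is precisely the anti-symmetry noted in \refe{anti-symmetric-property}. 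Adding then yields exactly \refe{energy-estimate-pde}. The argument is essentially routine; the only points needing care are the correct sign bookkeeping in the two vector integration-by-parts formulas and checking that every boundary integral is annihilated by \refe{bc-u}--\refe{bc-mag}, and of course one tacitly assumes $(\bu,p,\bB)$ is regular enough for all these manipulations to be licit.
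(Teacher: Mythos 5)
Your proposal is correct and is exactly the ``standard energy estimate'' that the paper invokes in one line: testing \refe{pde-1} with $\bu$ and \refe{pde-2} with $\bB$, integrating by parts, using \refe{pde-3} and \refe{bc-u}--\refe{bc-mag} to kill the convection, pressure, and boundary terms, and cancelling the two Lorentz/induction cross terms via the anti-symmetry \refe{anti-symmetric-property}. The only cosmetic quibble is the sign convention in your curl integration-by-parts boundary term, which is immaterial here since that term vanishes under \refe{bc-mag} in either convention.
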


\begin{proof} A standard energy estimate yields the desired results. \end{proof}

For the linearized FEM equations \refe{fem-1}-\refe{fem-3},
we can prove the following theorem,
which can be viewed as the discrete version
of Theorem \ref{thm-stability-pde}.
\begin{theorem}
\label{thm-stability-fem}
The numerical solutions $(\bu_h^n, p_h^n, \bB_h^n)$ to
the linearized mixed FEM \refe{fem-1}--\refe{fem-3}
satisfy the following energy preserving property
\begin{align}
  & \|\bu_h^n\|_{L^2}^2 + \|\bB_h^n\|_{L^2}^2
  + 2 \tau \frac{1}{R_e} \|\nabla \overline{\bu}_h^n \|_{L^2}^2
  + 2 \tau \frac{S_c}{R_m} \|\C \, \overline{\bB}_h^n \|_{L^2}^2
  \nn \\
  & = \|\bu_h^{n-1}\|_{L^2}^2 + \|\bB_h^{n-1}\|_{L^2}^2 +
  2 \tau (\mathbf{f}^{n} \,,\, \overline{\bu}_h^n) \,,
  \label{energy-estimate-fem}
\end{align}
which further results in the following energy stability
\begin{align}
  & \max_{0 \le j \le n}\left(\|\bu_h^j\|_{L^2}^2+\|\bB_h^j\|_{L^2}^2\right)
  + \sum_{j=1}^n \tau \left(\frac{1}{R_e} \|\nabla \, \overline{\bu}_h^j \|_{L^2}^2
  + \frac{S_c}{R_m} \|\C \, \overline{\bB}_h^j \|_{L^2}^2 \right)
  \nn \\
  & \le \|\bu_h^{0}\|_{L^2}^2 + \|\bB_h^{0}\|_{L^2}^2 +
  C \sum_{j=1}^n \tau \|\mathbf{f}^{j+1}\|_{H^{-1}}^2.
  \label{energy-estimate-fem-further}
\end{align}
\end{theorem}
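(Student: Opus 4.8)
The plan is to mimic the PDE-level energy identity in Theorem~\ref{thm-stability-pde} at the discrete level, exploiting the anti-symmetric structure of the two nonlinear coupling terms, and then to close the argument with the discrete Gronwall inequality (Lemma~\ref{gronwall}). First I would test the scheme with the natural choices $\bv_h = \overline{\bu}_h^n$ in \refe{fem-1}, $\bC_h = \overline{\bB}_h^n$ in \refe{fem-2}, and $q_h = p_h^n$ in \refe{fem-3}. The discrete time-derivative terms become $(D_\tau \bu_h^n, \overline{\bu}_h^n) = \frac{1}{2\tau}(\|\bu_h^n\|_{L^2}^2 - \|\bu_h^{n-1}\|_{L^2}^2)$ by the standard telescoping identity $(U^n - U^{n-1}, \tfrac12(U^n+U^{n-1})) = \tfrac12(\|U^n\|^2 - \|U^{n-1}\|^2)$, and similarly for $\bB$. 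The viscous and curl-curl terms give exactly $\frac{1}{R_e}\|\nabla\overline{\bu}_h^n\|_{L^2}^2$ and $\frac{S_c}{R_m}\|\C\,\overline{\bB}_h^n\|_{L^2}^2$.

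The crucial cancellations are the following three. The convection term: with $\bv_h = \overline{\bu}_h^n$ the bracket $\frac12[(\bu_h^{n-1}\cdot\nabla\overline{\bu}_h^n,\overline{\bu}_h^n) - (\bu_h^{n-1}\cdot\nabla\overline{\bu}_h^n,\overline{\bu}_h^n)]$ vanishes identically (this is the very reason the skew-symmetrized form is used). The pressure terms: $-(p_h^n,\nabla\cdot\overline{\bu}_h^n)$ coming from \refe{fem-1} tested with $\overline{\bu}_h^n$ is killed exactly by \refe{fem-3} with $q_h = p_h^n$. The magnetic coupling: adding \refe{fem-1} tested with $\overline{\bu}_h^n$ and \refe{fem-2} tested with $\overline{\bB}_h^n$ produces $-S_c(\C\,\overline{\bB}_h^n\times\bB_h^{n-1},\overline{\bu}_h^n) - S_c(\overline{\bu}_h^n\times\bB_h^{n-1},\C\,\overline{\bB}_h^n)$, and by the scalar triple product identity $(\mathbf{a}\times\mathbf{b})\cdot\mathbf{c} = -(\mathbf{c}\times\mathbf{b})\cdot\mathbf{a}$ these two integrands are pointwise negatives of each other, so their sum is zero. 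Collecting the surviving terms and multiplying by $2\tau$ yields \refe{energy-estimate-fem} exactly.

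For \refe{energy-estimate-fem-further} I would sum \refe{energy-estimate-fem} over $n = 1,\dots,j$, telescoping the $L^2$ norms, to get
\begin{align}
\|\bu_h^j\|_{L^2}^2 + \|\bB_h^j\|_{L^2}^2
+ 2\tau\sum_{n=1}^j\left(\tfrac{1}{R_e}\|\nabla\overline{\bu}_h^n\|_{L^2}^2 + \tfrac{S_c}{R_m}\|\C\,\overline{\bB}_h^n\|_{L^2}^2\right)
= \|\bu_h^0\|_{L^2}^2 + \|\bB_h^0\|_{L^2}^2 + 2\tau\sum_{n=1}^j(\mathbf{f}^n,\overline{\bu}_h^n).
\nn
\end{align}
Then bound the forcing term by duality, $2\tau(\mathbf{f}^n,\overline{\bu}_h^n) \le 2\tau\|\mathbf{f}^n\|_{H^{-1}}\|\nabla\overline{\bu}_h^n\|_{L^2} \le \frac{\tau}{R_e}\|\nabla\overline{\bu}_h^n\|_{L^2}^2 + C\tau R_e\|\mathbf{f}^n\|_{H^{-1}}^2$ via Young's inequality, absorbing the gradient term into the left side. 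Taking the maximum over $j$ (which is legitimate since the right-hand side after absorption is monotone in $j$) gives \refe{energy-estimate-fem-further}. No Gronwall step is actually needed here because the only term requiring control is the forcing, which is absorbed directly; the main (very mild) obstacle is simply bookkeeping the constants and confirming the triple-product sign so that the magnetic coupling genuinely cancels rather than doubling.
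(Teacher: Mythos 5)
Your proof is correct and follows essentially the same route as the paper: test \refe{fem-1}--\refe{fem-3} with $\overline{\bu}_h^n$, $\overline{\bB}_h^n$, $p_h^n$, use the telescoping identity for $D_\tau$, and let the skew-symmetrized convection term, the pressure--divergence pair, and the anti-symmetric magnetic coupling cancel to obtain \refe{energy-estimate-fem}. The only (cosmetic) divergence is in the second half: the paper invokes the discrete Gronwall inequality to pass to \refe{energy-estimate-fem-further}, whereas you absorb the forcing directly via the $H^{-1}$--$\accentset{\circ}{\bH}^1$ duality, Poincar\'e, and Young's inequality; both work, and your observation that no Gronwall step is actually needed here is accurate.
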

\begin{proof}
  By taking $\bv_h = \overline{\bu}_h^n$ into \refe{fem-1},
  $\bC_h= \overline{\bB}_h^n$   into \refe{fem-2}
  and $q_h=p_h^n$ into \refe{fem-3}, respectively, and summing up
  the results, we obtain  \refe{energy-estimate-fem}.
  By using the discrete Gronwall's inequality, we can prove
  \refe{energy-estimate-fem-further}.
\end{proof}
\begin{remark}
  We shall note that Theorem \ref{thm-stability-fem} does
  not depend on the choice of the finite element space.
  If other boundary conditions are used, energy preserving
  property can be proved similarly.
\end{remark}

\subsection{Error analysis of the linearized FEM}
\setcounter{equation}{0}

To do the error estimate, we assume that the initial-boundary
value problem \refe{pde-1}-\refe{initial}
has a unique solution satisfying the regularity assumption below
\begin{equation} 
  \left \{
  \begin{array}{l}
      \bu \in {L^{\infty}(0,T;\bH^{{l+1}})} \, ,
      \bu_{t} \in {L^{\infty}(0,T;\bH^{{l+1}})} \, , 
      \bu_{tt} \in {L^2(0,T;\mathbf{L}^{2})}
      \\
      p \in {L^{\infty}(0,T;{H}^{{l}})} \,,
      p_{t} \in {L^{\infty}(0,T;H^{{l}})} \, , 
      \label{regularity-1}
  \end{array}
  \right.
\end{equation} 
and
\begin{equation} 
  \left \{
  \begin{array}{l}
    \bB \in {L^{\infty}(0,T;\mathbf{H}^{l})} \, , 
    \bB_{t} \in {L^{\infty}(0,T;\mathbf{H}^{l})} \, , 
    \bB_{tt} \in {L^2(0,T;\mathbf{L}^{2})} \, ,
    \\
    \C \, {\bB} \in {L^{\infty}(0,T;\mathbf{H}^{l})} \, , 
    \C \, {\bB}_{t} \in {L^{\infty}(0,T;\mathbf{H}^{l})} \, , 
    \label{regularity-2}
  \end{array}
  \right.
\end{equation} 
where $l > \frac{1}{2}$ depends on the regularity of the domain $\Omega$.
In the rest part of this paper, for simplicity of notation we denote
by $C$ a generic positive constant and $\epsilon$ a generic small
positive constant, which are independent of $j$, $h$ and $\tau$.
We present our main results on error estimates in the following theorem.

\begin{theorem}
\label{thm-main-error}
Suppose that the incompressible MHD system
\refe{pde-1}-\refe{initial} has a unique solution $(\bu, p, \bB)$
satisfying the regularity \refe{regularity-2}.
Then the linearized backward Euler
mixed FEM \refe{fem-1}-\refe{fem-3} admits a unique
solution $(\bu_h^n, p_h^n, \bB_h^n)  \in \bX_h^{k+1} \times \bQ_h^{\widehat{k}} \times M_h^k$
for $n=1$, $\ldots$, $N$,
and there exist two positive constants $\tau_0$ and $h_0$ such
that when $\tau < \tau_0$ and $h \le h_0$
\begin{align}
  & \max_{0 \leq n \leq N} \Big(
  {\| \bu_{h}^{n} - \bu^{n}\|_{L^2}^2}
  +{\| \bB_{h}^{n} - \bB^{n}\|_{L^2}^2} \Big)
  \nn \\
  & + \tau \sum_{m=1}^{N}
  \left( \| \nabla (\overline{\bu}_{h}^{m} - \overline{\bu}^{m}) \|_{L^2}^2
  + \| \C (\overline{\bB}_{h}^{m} - \overline{\bB}^{m}) \|_{L^2}^2
  \right)
  \leq C_* (\tau^{2} + h^{2s}) \, ,
  \label{main-error-results}
\end{align}
with $s = \min \{k, \widehat{k}, l\}$,
where $\widehat{k}$ and $k$ are
the order index of the finite element spaces,
$l$ is the index of regularity of the exact solutions.
In \refe{main-error-results},
$C_*$ is a positive constant independent of $n$, $h$ and $\tau$. 
\end{theorem}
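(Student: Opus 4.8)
The plan is to run a standard energy-type error analysis for a linearized Crank--Nicolson (midpoint) discretization, exploiting the anti-symmetric cancellation identified in the introduction. First I would define the error splittings: for the velocity write $\bu^n - \bu_h^n = (\bu^n - \Pi_h\bu^n) + (\Pi_h\bu^n - \bu_h^n) =: \bm{\eta}_u^n + \bm{\theta}_u^n$, and similarly $\bB^n - \bB_h^n = \bm{\eta}_B^n + \bm{\theta}_B^n$ and $p^n - p_h^n = \eta_p^n + \theta_p^n$, using the projection operators $\Pi_h$ from Lemma \ref{lemma-interpolation}. The projection parts $\bm{\eta}$ are controlled directly by \refe{interpolate}: here it is essential that the {N\'ed\'elec} interpolation estimate only requires $\frac12 < s \le k$, matching the low regularity $l > \frac12$ of $\bB$, which is the whole point of using edge elements. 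It remains to bound the discrete-function parts $\bm{\theta}$. Subtracting the FEM scheme \refe{fem-1}--\refe{fem-3} from the weak form of \refe{pde-1}--\refe{pde-4} evaluated at $t_{n-1/2}$ (or at $\overline{(\cdot)}^n$), I get error equations for $(\bm{\theta}_u^n, \bm{\theta}_B^n, \theta_p^n)$ with a right-hand side consisting of: (i) temporal truncation errors of the form $D_\tau\bu^n - \bu_t(t_{n-1/2})$ and $\overline{\bu}^n - \bu(t_{n-1/2})$, which are $O(\tau^2)$ in the appropriate norms by Taylor expansion with integral remainder, using $\bu_{tt}, \bB_{tt}\in L^2(0,T;L^2)$; (ii) projection-error terms involving $\bm{\eta}$ and its time difference, bounded by $h^s$; and (iii) the linearization/consistency errors from replacing $\bu^n$ by $\bu_h^{n-1}$ in the convection and coupling terms.

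The heart of the argument is to test the $\bm{\theta}$-error equations with $\bv_h = \overline{\bm{\theta}}_u^n$, $\bC_h = \overline{\bm{\theta}}_B^n$, $q_h = \theta_p^n$ and add. The pressure term drops by \refe{fem-3} and the weak incompressibility of $\bu$. The viscous and $\curl\curl$ terms produce the good dissipative quantities $\tfrac{1}{R_e}\|\nabla\overline{\bm{\theta}}_u^n\|_{L^2}^2$ and $\tfrac{S_c}{R_m}\|\C\overline{\bm{\theta}}_B^n\|_{L^2}^2$; the time-difference terms telescope into $\tfrac{1}{2\tau}(\|\bm{\theta}_u^n\|^2 - \|\bm{\theta}_u^{n-1}\|^2) + \tfrac{1}{2\tau}(\|\bm{\theta}_B^n\|^2-\|\bm{\theta}_B^{n-1}\|^2)$. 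The skew-symmetrized convection term $\tfrac12[(\bu_h^{n-1}\cdot\nabla\overline{\bm{\theta}}_u^n,\overline{\bm{\theta}}_u^n) - (\bu_h^{n-1}\cdot\nabla\overline{\bm{\theta}}_u^n,\overline{\bm{\theta}}_u^n)]$ vanishes identically by construction, and — crucially — the two coupling terms $-S_c(\C\overline{\bm{\theta}}_B^n\times\bB_h^{n-1},\overline{\bm{\theta}}_u^n)$ and $-S_c(\overline{\bm{\theta}}_u^n\times\bB_h^{n-1},\C\overline{\bm{\theta}}_B^n)$ cancel by the anti-symmetric identity \refe{anti-symmetric-property} (with the \emph{same} frozen coefficient $\bB_h^{n-1}$). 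So none of the quadratic-in-$\bm{\theta}$ terms coming from the principal nonlinearities contribute; only cross terms where $\bm{\theta}$ meets $\bm{\eta}$, a truncation error, or the difference $\bB^{n-1} - \bB_h^{n-1}$ (resp. $\bu^{n-1}-\bu_h^{n-1}$) survive. These cross terms are estimated by H\"older and Young inequalities; the delicate ones (e.g.\ $(\C\overline{\bm{\theta}}_B^n\times(\bB^{n-1}-\bB_h^{n-1}),\overline{\bu}^{n-1})$-type) require an $L^\infty$ or $L^3$ bound on the \emph{exact} solution velocity/magnetic field together with the dissipative terms to absorb $\|\C\overline{\bm{\theta}}_B^n\|_{L^2}$, leaving $\|\bm{\theta}_B^{n-1}\|_{L^2}^2$ and $\|\bm{\theta}_u^{n-1}\|_{L^2}^2$ on the right. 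After multiplying by $2\tau$ and summing over $n$, I obtain
\begin{align}
  \|\bm{\theta}_u^n\|_{L^2}^2 + \|\bm{\theta}_B^n\|_{L^2}^2
  + \tau\sum_{m=1}^n\Big(\tfrac{1}{R_e}\|\nabla\overline{\bm{\theta}}_u^m\|_{L^2}^2
  + \tfrac{S_c}{R_m}\|\C\overline{\bm{\theta}}_B^m\|_{L^2}^2\Big)
  \le C(\tau^2 + h^{2s})^2\!\!\quad\text{-type bound} + C\tau\sum_{m=0}^{n-1}(\|\bm{\theta}_u^m\|_{L^2}^2+\|\bm{\theta}_B^m\|_{L^2}^2),
  \nn
\end{align}
to which the discrete Gronwall inequality (Lemma \ref{gronwall}) applies once $\tau$ is small enough that $\tau\gamma_k < 1$. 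The triangle inequality and Lemma \ref{lemma-interpolation} then give \refe{main-error-results}.

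The main obstacle is \emph{not} the well-posedness (already handled in Remark \ref{remark-matrix-form}) but the rigorous treatment of the nonlinear cross terms under the low regularity $l>\tfrac12$: on a non-convex domain $\bB$ need not be in $\bH^1$, so terms like $(\overline{\bm{\theta}}_u^n\times(\bB^{n-1}-\bB_h^{n-1}),\C\overline{\bm{\theta}}_B^n)$ cannot be bounded by putting $\bB^{n-1}-\bB_h^{n-1}$ in $L^\infty$. The remedy is to always arrange that the \emph{exact} solution's velocity $\bu$ (which is in $\bH^{l+1}\hookrightarrow L^\infty$ for $l>\tfrac12$ in the relevant cases, via Gagliardo--Nirenberg, Lemma \ref{GN}) carries the $L^\infty$ (or $L^3$) norm, while the low-regularity magnetic error factor stays in $L^2$ or $L^6$ and the $\C$ of the magnetic error is absorbed into the dissipation; one also needs a uniform bound on $\|\bu_h^{n-1}\|$ in a suitable norm, which either follows from the energy stability \refe{energy-estimate-fem-further} plus an inverse inequality, or from a bootstrap/induction argument on $n$ using the error estimate itself to control $\|\bu_h^{n-1}\|_{L^\infty}$ via $\|\bu^{n-1}\|_{L^\infty} + Ch^{-d/2}(\|\bm{\eta}_u^{n-1}\|_{L^2}+\|\bm{\theta}_u^{n-1}\|_{L^2})$. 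Making this bootstrap close requires the mild mesh/time restrictions $\tau<\tau_0$, $h\le h_0$ stated in the theorem. The convection term analysis is routine given the Taylor--Hood regularity, so the edge-element coupling terms are where the care must go.
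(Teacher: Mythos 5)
Your overall architecture (error splitting, testing with the averaged errors, exact cancellation of the skew-symmetrized convection term and of the two coupling terms sharing the frozen coefficient $\bB_h^{n-1}$, then discrete Gronwall) matches the paper's proof. But the one point you yourself flag as the main obstacle --- how to bound the surviving nonlinear cross terms without strong-norm control of the numerical solution --- is exactly where your proposal has a genuine gap. Your option (a), energy stability \refe{energy-estimate-fem-further} plus an inverse inequality, only yields $\|\bu_h^{n-1}\|_{L^\infty}\le Ch^{-d/2}\|\bu_h^{n-1}\|_{L^2}\le Ch^{-d/2}$, which is not uniform and cannot be used. Your option (b), the bootstrap $\|\bu_h^{n-1}\|_{L^\infty}\le \|\bu^{n-1}\|_{L^\infty}+Ch^{-d/2}(\tau+h^{s})$, does not close under the paper's standing assumption: in three dimensions with $s=\min\{k,\widehat k,l\}$ and $l>\tfrac12$ possibly close to $\tfrac12$, the term $h^{-3/2}h^{s}$ blows up, and one would additionally be forced into a coupling $\tau\lesssim h^{3/2}$. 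Since the whole point of the theorem is an unconditional, low-regularity estimate, neither route is admissible.

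The idea you are missing is that the paper arranges the algebra so that \emph{no} bound on $\bu_h^{n-1}$ or $\bB_h^{n-1}$ beyond $L^2$ is ever required. Concretely: in the convection term one writes $\overline{\bu}_h^{n}=\overline{e}_{\bu}^{n}+R_h\overline{\bu}^{n}$, and the parts quadratic in the error cancel by skew-symmetry, leaving only terms weighted by the Stokes projection $R_h\overline{\bu}^{n}$ of the \emph{exact} solution; likewise, in the coupling terms one writes $\bB_h^{n-1}-\bB^{n-1}=e_{\bB}^{n-1}+\theta_{\bB}^{n-1}$ and keeps $\C\,\Pi_h\overline{\bB}^{n}$ and $R_h\overline{\bu}^{n}$ as the weights. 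These projected exact-solution quantities are uniformly bounded in $L^\infty$, $W^{1,3}$ and $L^3$ by inverse inequalities applied to the projection error together with \refe{regularity-1}--\refe{regularity-2} (estimates \refe{uniform-bound-u}--\refe{uniform-bound-B}), with no mesh condition. Two further, smaller discrepancies: the paper uses the Stokes projection \refe{projector-stokes} for $(\bu,p)$ rather than the plain interpolant, which eliminates the pressure consistency term; and the scheme's temporal truncation is only $O(\tau)$ (the discrete time derivative and the data $\mathbf{f}^{n}$ are centered at $t^{n}$, not $t^{n-1/2}$, and the extrapolations $\bu_h^{n-1},\bB_h^{n-1}$ are first order), so your claimed $O(\tau^2)$ truncation is incorrect, though the weaker $O(\tau)$ rate is all that \refe{main-error-results} asserts.
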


\subsubsection{The Stokes projection and some error bounds}

To do error analysis,
we shall introduce the Stokes projection
$\mathbf{R}_h: (\accentset{\circ}{\bH}^1, L^2_0)
\rightarrow (\bX_h^{k+1}, M_h^{k})$.
For given $t \in (0,T]$, we look for
  $\mathbf{R}_h(\bu,p):= (\mathbf{R}_h(\bu,p)_1,\mathbf{R}_h(\bu,p)_2)
  \in (\bX_h^{k+1}, M_h^{k})$
such that
\begin{align}
  \left\{
  \begin{array}{ll}
    \frac{1}{R_e}(\nabla (\mathbf{R}_h(\bu,p)_1 - \bu),\nabla \bv_h)
    -(\mathbf{R}_h(\bu,p)_2-p,\nabla \cdot \bv_h) = 0\, ,
    & \forall \bv_h \in \bX_h^{k+1},
    \\[4pt]
    (\nabla \cdot (\mathbf{R}_h(\bu,p)_1- \bu), q_h)=0 \, ,
    & \forall q_h \in M_h^k \,.
  \end{array}
  \right.
  \label{projector-stokes}
\end{align}
For simplicity, we denote
\[
R_h \bu = \mathbf{R}_h(\bu,p)_1,\quad
R_h p = \mathbf{R}_h(\bu,p)_2,\quad
\]
Theoretical analysis on convergence 
and stability of the above projections 
can be found in \cite{Girault-Raviart}.
We summarize the main results in the following lemma.

\begin{lemma}
  \label{projection-stokes-mag}
For the projections defined above,  
the following error estimates hold 
  \begin{align}
  & \|R_h \bu - \bu\|_{\bH^1} + \|R_h p - p \|_{L^2} 
    \le C \Big(
    \inf_{\bv_h \in \bX_h^{k+1}}\|\bv_h - \bu\|_{\bH^1}
    + \inf_{q_h \in M_h^k}\| q_h - p \|_{L^2} 
    \Big) \,,
    \label{elliptic-error1}
  \end{align}
We denote the projection errors of $(\bu, p)$ and
interpolation error of $\bB$ by
\begin{align}
  & \theta_{\bu} = R_h \bu - \bu, \quad
  \theta_{p} = R_h p - p, \quad
  \theta_{\bB} = \Pi_h \bB - \bB.
  \nn
\end{align}
Then, by the regularity assumption \refe{regularity-2} and
Lemma \ref{projection-stokes-mag}, we have 
\begin{align}
  & \left\|\theta_{\bu}\right\|_{H^1} + \left\|\theta_{p}\right\|_{L^2}
  \le C h^s ( \| \bu \|_{H^{1+s}} + \| p \|_{H^{s}} ),
  \label{projection-stokes-error} \\
  & \left\|\theta_{\bB}\right\|_{\bH(\C)} \le C h^s \| \bB \|_{\bH^s(\C)} 
  \label{projection-mag-error} 
\end{align}
and
\begin{align}
  & \left\|\frac{\partial \theta_{\bu}}{\partial t}\right\|_{H^s}
  \le C h^{s} \left(\left\|\frac{\partial \bu}{\partial t}\right\|_{H^{1+s}}
  + \left\| \frac{\partial p}{\partial t} \right\|_{H^{s}} \right),
  \nn \\
  & \left\|\frac{\partial \theta_{\bB}}{\partial t}\right\|_{L^2}
  \le C h^s \left( \left\|\frac{\partial \bB}{\partial t}\right\|_{H^s}
  + \left\|\C \, \frac{\partial \bB}{\partial t}\right\|_{H^s}
  \right).
\end{align}
Moreover, by using inverse inequalities
we can deduce the following uniform boundedness
for $R_h\bu$ and $\Pi_h\bB$
\begin{align}
  &\|R_h\bu \|_{^\infty} + \|R_h\bu \|_{W^{1,3}}
  \le C(\|\bu\|_{H^{1+l}} + \|p\|_{H^{l}}) \le C\, .
  \label{uniform-bound-u}
  \\
  &\|\Pi_h\bB \|_{L^3} + \|\C \, \Pi_h\bB \|_{L^3}
  \le C(\|\bB\|_{H^l}+ \|\C \,\bB\|_{H^l}) \le C.
  \label{uniform-bound-B}
\end{align}
where $l>\frac{1}{2}$ in the regularity assumption
\refe{regularity-1}-\refe{regularity-2}.
\end{lemma}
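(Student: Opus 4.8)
The statement collects three facts that I would establish separately: the quasi-optimality \refe{elliptic-error1} of the Stokes projection, the explicit rates \refe{projection-stokes-error}--\refe{projection-mag-error} for the errors and their time derivatives, and the uniform Sobolev-norm bounds \refe{uniform-bound-u}--\refe{uniform-bound-B}. For \refe{elliptic-error1}, observe that by \refe{projector-stokes} the pair $(R_h\bu,R_h p)$ is the Galerkin mixed approximation in $\bX_h^{k+1}\times M_h^k$ of a Stokes problem whose exact solution is $(\bu,p)$; subtracting \refe{projector-stokes} from the continuous identities gives the Galerkin orthogonality $\frac{1}{R_e}(\nabla(R_h\bu-\bu),\nabla\bv_h)-(R_h p-p,\D\,\bv_h)=0$ for all $\bv_h\in\bX_h^{k+1}$, together with $(\D(R_h\bu-\bu),q_h)=0$ for all $q_h\in M_h^k$. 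On the discretely divergence-free subspace of $\bX_h^{k+1}$ the form $\frac{1}{R_e}(\nabla\cdot,\nabla\cdot)$ is coercive by the Poincar\'e inequality, and the discrete inf--sup condition \refe{LBB-ns} controls the pressure, so the standard mixed finite element argument (see \cite{Girault-Raviart,Boffi-Brezzi-Fortin}) yields \refe{elliptic-error1} with $C$ depending only on $R_e$, $\beta_1$ and $\Omega$.

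Estimate \refe{projection-stokes-error} then follows by inserting the Lagrange interpolants $\Pi_h\bu\in\bX_h^{k+1}$ and $\Pi_h p\in M_h^k$ as competitors in the two infima of \refe{elliptic-error1} and applying Lemma~\ref{lemma-interpolation} (together with the usual $\bH^1$ Lagrange estimate), which produces rates up to $h^{\min\{k,l\}}$ with $l$ the regularity index in \refe{regularity-1}. Estimate \refe{projection-mag-error} is immediate, since $\theta_{\bB}=\Pi_h\bB-\bB$ is exactly the N\'ed\'elec interpolation error, which is well defined and bounded by the third line of \refe{interpolate} because $\bB,\C\,\bB\in\bH^l$ with $l>\frac12$. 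For the time derivatives, the key point is that $\mathbf R_h$ and the N\'ed\'elec interpolant are fixed, time-independent linear operators, hence they commute with $\partial_t$: $\partial_t\theta_{\bu}=\mathbf R_h(\bu_t,p_t)_1-\bu_t$ and $\partial_t\theta_{\bB}=\Pi_h\bB_t-\bB_t$. Reapplying \refe{elliptic-error1} to $(\bu_t,p_t)$ (and using $\|\cdot\|_{H^s}\le C\|\cdot\|_{H^1}$ for $s\le1$) and Lemma~\ref{lemma-interpolation} to $\bB_t$ gives the stated derivative bounds, the regularity of $\bu_t,p_t,\bB_t,\C\,\bB_t$ being supplied by \refe{regularity-1}--\refe{regularity-2}.

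For the uniform bounds I would split each projected or interpolated quantity into an interpolant plus a finite element remainder. Writing $R_h\bu=\Pi_h\bu+(R_h\bu-\Pi_h\bu)$, the Sobolev embedding $\bH^{1+l}(\Omega)\hookrightarrow L^\infty(\Omega)\cap W^{1,3}(\Omega)$, valid because $l>\frac12$ and $d\le3$, bounds $\bu$ in these norms by $C\|\bu\|_{\bH^{1+l}}$, and $\Pi_h\bu$ inherits the bound from the $L^\infty$- and $W^{1,3}$-stability of Lagrange interpolation on the quasi-uniform mesh. The remainder $R_h\bu-\Pi_h\bu\in\bX_h^{k+1}$ is then handled by inverse inequalities in terms of $\|R_h\bu-\Pi_h\bu\|_{\bH^1}\le\|\theta_{\bu}\|_{\bH^1}+\|\bu-\Pi_h\bu\|_{\bH^1}=O(h^{\min\{k,l\}})$ and $\|R_h\bu-\Pi_h\bu\|_{L^2}\le\|\theta_{\bu}\|_{L^2}+\|\bu-\Pi_h\bu\|_{L^2}$: the $W^{1,3}$ inverse estimate costs only $h^{-d/6}$, which is absorbed since $\min\{k,l\}>d/6$ for $d\le3$, whereas for the $L^\infty$ bound one additionally invokes an Aubin--Nitsche duality estimate for $\|\theta_{\bu}\|_{L^2}$ to compensate the factor $h^{-d/2}$ from the $L^2$-to-$L^\infty$ inverse inequality. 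The bound \refe{uniform-bound-B} is obtained in the same way, using $\bH^l(\Omega)\hookrightarrow L^3(\Omega)$ for $l>\frac12$, $d\le3$, the commuting-diagram identity $\C\,\Pi_h\bB=P_h(\C\,\bB)$ with $P_h$ a suitable Raviart--Thomas interpolation operator, and the $L^2$ interpolation bounds in \refe{interpolate}.

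The main obstacle is this last maximum-norm and $L^3$ part: on a non-convex Lipschitz domain only the reduced regularity $l>\frac12$ is available, so balancing the negative powers of $h$ produced by the inverse inequalities against the convergence rates of the projection and interpolation errors is delicate, and the argument rests entirely on the threshold $1+l>d/2$, which is precisely the condition built into the regularity hypothesis \refe{regularity-1}--\refe{regularity-2}.
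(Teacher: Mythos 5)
Your overall plan coincides with what the paper intends: the paper gives no written proof of this lemma, deferring \refe{elliptic-error1} to the standard mixed-method theory in \cite{Girault-Raviart} (Galerkin orthogonality from \refe{projector-stokes} plus the inf--sup condition \refe{LBB-ns}), obtaining the rates by inserting interpolants, getting the time-derivative bounds from the fact that $\mathbf{R}_h$ and $\Pi_h$ commute with $\partial_t$, and asserting \refe{uniform-bound-u}--\refe{uniform-bound-B} ``by using inverse inequalities.'' Your first three paragraphs reconstruct exactly this and are fine.

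There is, however, one step in your last part that does not close as written: the $L^\infty$ bound on $R_h\bu$. You propose to control $\|R_h\bu-\Pi_h\bu\|_{L^\infty}$ by the inverse inequality $\|v_h\|_{L^\infty}\le Ch^{-d/2}\|v_h\|_{L^2}$ and to compensate the factor $h^{-3/2}$ by an Aubin--Nitsche estimate for $\|\theta_{\bu}\|_{L^2}$. On the domains this paper insists on (non-convex Lipschitz polyhedra), the dual Stokes problem only gains $H^{1+s_0}$ regularity with some $s_0\in(\tfrac12,1)$, so duality yields at best $\|\theta_{\bu}\|_{L^2}=O(h^{s_0+\min\{k,l\}})$; with $s_0$ and $l$ both arbitrarily close to $\tfrac12$ this is $O(h^{1+\epsilon})$, and $h^{-3/2}\cdot h^{1+\epsilon}$ blows up. The argument that actually works --- and is consistent with your own treatment of the $W^{1,3}$ norm --- is to use the milder inverse inequality $\|v_h\|_{L^\infty}\le Ch^{-d/6}\|v_h\|_{L^6}\le Ch^{-1/2}\|v_h\|_{H^1}$ (for $d=3$) applied to $v_h=R_h\bu-\Pi_h\bu$, whose $H^1$ norm is $O(h^{\min\{k,l\}})$ by \refe{elliptic-error1} and Lemma \ref{lemma-interpolation}; the exponent $\min\{k,l\}-\tfrac12$ is positive precisely because $l>\tfrac12$ and $k\ge 1$, so no duality is needed. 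A similar remark applies to $\|\C\,\Pi_h\bB\|_{L^3}$: rather than invoking the commuting diagram and an $L^3$ theory for the Raviart--Thomas interpolant, it is simpler to write $\C\,\Pi_h\bB=\C\,\bB+\C(\Pi_h\bB-\bB)$, use $H^l\hookrightarrow L^3$ for the first term, and bound the second in $L^3$ by an elementwise inverse/interpolation estimate costing $h^{-1/2}$ against the $O(h^{\min\{\widehat{k},l\}})$ curl error from \refe{interpolate}. With these replacements your proof is complete and matches the paper's (implicit) argument.
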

With the above projection error estimates we only
need to estimate the following error equations
\begin{align}
  & e_{\bu}^n = \bu_h^n - R_h \bu^n, \quad
  e_{p}^n = p_h^n - R_h p^n, \quad
  e_{\bB}^n = \bB_h^n - R_h \bB^n
\end{align}
for $n=0$, $1$, $\ldots$, $N$.

\subsubsection{The proof of the error estimate in Theorem \ref{thm-main-error}}
%================================
% proof of main theory on error estimates
%================================
\begin{proof}
At the initial time step, we have
\begin{align}
& \| e_{\bu}^{0}\|_{L^2}^2 + \| e_{\bB}^{0}\|_{L^2}^2 \le C h^{2s} \, .
\nn
\end{align}
By the projection \refe{projector-stokes}
and the regularity assumption \refe{regularity-2},
one can verify that the exact solution satisfies the 
formulation below
\begin{empheq}[left=\empheqlbrace]{align} 
  & (D_\tau \bu^n, \bv_h)
  + \frac{1}{R_e} (\nabla R_h \overline{\bu}^n, \nabla \bv_h)
  + \frac{1}{2} \big[ ( \bu^{n-1} \cdot \nabla \, \overline{\bu}^n, \bv_h)
    - (\bu^{n-1} \cdot \nabla \, \bv_h \,,\, \overline{\bu}^n ) \big]
  \nn \\
  & \qquad \quad \quad ~\,~
  - (R_h{p}^n \,,\, \nabla \cdot \bv_h)
  - S_c (\C \, \overline{\bB}^{n} \, \times \bB^{n-1} \, , \, \bv_h)
  = (\mathbf{f}^{n}, \bv_h) + R_1(\bv_h),
  \label{weak-fem-1}\\[5pt]
  & (\nabla \cdot R_h \overline{\bu}^n, q_h) = 0,
  \label{weak-fem-3}\\[5pt]
  & ( D_\tau \bB^n , \bC_h )
  + \frac{S_c} {R_m} \left( \C \, \Pi_h \overline{\bB}^n, \C \, \bC_h \right)
  - S_c \left( \overline{\bu}^n \times \bB^{n-1}, \C \, \bC_h \right)
  = R_2(\bC_h) ,
  \label{weak-fem-2}
\end{empheq}
for any $(\bv_h,q_h,\bC_h) \in \bX_{h}^k \times {M}_{h}^k \times {\bQ}_h^{\widehat{k}}$.
Here the two truncation error terms are defined by
\begin{align}
  & R_1(\bv_h) =
  (D_\tau \bu^n - \frac{\partial \bu}{\partial t}(\bm{x},t^{n}) , \bv_h)
  + \frac{1}{R_e} (\nabla (R_h \overline{\bu}^n - R_h{\bu}^{n}), \nabla \bv_h)
  \nn \\
  & \qquad  \qquad 
  + \Big( 
  \frac{1}{2} \big[ ( \bu^{n-1} \cdot \nabla \, \overline{\bu}^n \,,\, \bv_h)
    - (\bu^{n-1} \cdot \nabla \, \bv_h \,,\, \overline{\bu}^n ) \big]
  - (\bu^{n} \cdot \nabla \, \bu^{n}, \bv_h)\Big)
  \nn \\
  & \qquad \qquad 
  + S_c (\C \, \bB^{n} \, \times \bB^{n}
  - \overline{\bB}^{n} \, \times \bB^{n-1} \, , \, \bv_h)\,,
  \label{truncation-term-1}
  \\[5pt]
  & R_2(\bC_h) =
  (D_\tau \bB^n - \frac{\partial \bB}{\partial t}(\bm{x},t^{n}), \bC_h)
  + S_c R_m^{-1} \left( \C \, (\Pi_h\overline{\bB}^n-{\bB}^{n}), \C \, \bC_h \right)
  \nn \\
  & \qquad  \qquad 
  + S_c (\bu^{n} \times \bB^{n}
  - \overline{\bu}^n \times \bB^{n-1} \,,\, \C \, \bC_h).
\end{align}
Then, subtracting \refe{weak-fem-1}-\refe{weak-fem-2}
from the FEM system  \refe{fem-1}-\refe{fem-3} gives the error equations
\begin{align} 
  & (D_\tau e_\bu^n, \bv_h)
  + R_e^{-1} (\nabla \overline{e}_\bu^n \, , \, \nabla \bv_h)
  - ({e}_p^n, \nabla \cdot \bv_h)
  \nn \\
  & 
  = \frac{1}{2}\left( \big[( \bu^{n-1} \cdot \nabla \, \overline{\bu}^n, \bv_h)
    - (\bu^{n-1} \cdot \nabla \, \bv_h \, ,\, \overline{\bu}^n ) \big]
  - \big[ ( \bu_h^{n-1} \cdot \nabla \, \overline{\bu}_h^n, \bv_h)
    - (\bu_h^{n-1} \cdot \nabla \, \bv_h, \overline{\bu}_h^n ) \big]
  \right)
  \nn \\
  & 
  + \left(S_c (\C \, \overline{\bB}_h^{n} \, \times \bB_h^{n-1} \, , \, \bv_h)
  - S_c (\C \, \overline{\bB}^{n} \, \times \bB^{n-1} \, , \, \overline{\bv}_h)\right)
  -(D_\tau \theta_\bu^n, \bv_h) - R_1(\bv_h),
  \label{error-fem-1}\\[5pt]
  & (\nabla \cdot \overline{e}_\bu^n, q_h) = 0,
  \label{error-fem-3}\\[5pt]
  & ( D_\tau e_\bB^n , \bC_h )
  + S_c R_m^{-1} (\C \, \overline{e}_\bB^n, \C \, \bC_h)
  \nn \\
  & 
  = S_c \left( \overline{\bu}_h^n \times \bB_h^{n-1} - \overline{\bu}^n \times \bB^{n-1},
  \C \, \bC_h \right)
  - ( D_\tau \theta_\bB^n , \bC_h ) - R_2(\bC_h),
  \label{error-fem-2}
\end{align} 
for any $(\bv_h,q_h,\bC_h) \in \bX_{h}^k \times {M}_{h}^k \times {\bQ}_h^{\widehat{k}}$.
We take $\bv_h = \overline{e}_{\bu}^n$ in \refe{error-fem-1},
$q_h = {e}_{p}^n$ in \refe{error-fem-3}
and $\bC_h = \overline{e}_{\bB}^n$ in \refe{error-fem-2}, respectively,
and summing up the results to derive that
\begin{align} 
  & (D_\tau e_\bu^n, \overline{e}_\bu^n) + ( D_\tau e_\bB^n , \overline{e}_\bB^n)
  + R_e^{-1} \|\nabla \, \overline{e}_\bu^n \|_{L^2}^2
  + S_c R_m^{-1} \|\C \, \overline{e}_\bB^n\|_{L^2}^2
  \nn \\
  &
  = -\frac{1}{2}\Big\{
  \big[ ( \bu_h^{n-1} \cdot \nabla \, \overline{\bu}_h^n, \overline{e}_\bu^n)
    - (\bu_h^{n-1} \cdot \nabla \, \overline{e}_\bu^n, \overline{\bu}_h^n ) \big]
  - \big[ ( \bu^{n-1} \cdot \nabla \, \overline{\bu}^n, \overline{e}_\bu^n)
    - (\bu^{n-1} \cdot \nabla \, \overline{e}_\bu^n, \overline{\bu}^n ) \big]\Big\}
  \nn \\
  & 
  \quad + S_c \left(\C \, \overline{\bB}_h^{n} \, \times \bB_h^{n-1}
  - \C \, \overline{\bB}^{n} \, \times \bB^{n-1} \, , \, \overline{e}_\bu^n\right)
  \nn \\
  & 
  \quad + S_c\left( \overline{\bu}_h^n \times \bB_h^{n-1}
  - \overline{\bu}^n \times \bB^{n-1},
  \C \, \overline{e}_\bB^n \right)
  \nn \\
  & 
  \quad -(D_\tau \theta_\bu^n, \overline{e}_\bu^n)
  - ( D_\tau \theta_\bB^n , \overline{e}_\bB^n)
  - R_1(\overline{e}_\bu^n) - R_2(\overline{e}_\bB^n)
  \nn \\
  & := I_1(\overline{e}_\bu^n) + I_2(\overline{e}_\bu^n)
  + I_3(\overline{e}_\bB^n)
  -(D_\tau \theta_\bu^n, \overline{e}_\bu^n) 
  - ( D_\tau \theta_\bB^n , \overline{e}_\bB^n)
  - R_1(\overline{e}_\bu^n) - R_2(\overline{e}_\bB^n).
  \label{error-fem-1-pre}
\end{align} 
By noting the regularity assumption \refe{regularity-1}-\refe{regularity-2}
and Lemma \ref{projection-stokes-mag}, the linear terms on the right hand side
of \refe{error-fem-1-pre} satisfy
\begin{align} 
  & \tau \sum_{m=1}^n \left\{
  - (D_\tau \theta_\bu^m, \overline{e}_\bu^m) 
  - ( D_\tau \theta_\bB^m , \overline{e}_\bB^m)
  - R_1(\overline{e}_\bu^m) - R_2(\overline{e}_\bB^m) \right\}
  \nn \\
  &\le \tau \sum_{m=1}^n \left\{
  \epsilon \|\overline{e}_\bu^m\|_{H^1}^2
  + \epsilon \|\C \, \overline{e}_\bB^m\|_{H^1}^2
  + C \|e_\bu^m\|_{L^2}^2 + C\|e_\bB^m\|_{L^2}^2
  + C\|e_\bu^{n-1}\|_{L^2}^2 + C\|e_\bB^{n-1}\|_{L^2}^2 \right\}
  \nn \\
  & \quad 
  + C\tau^2 + \epsilon^{-1}Ch^{2s}  \,.
  \label{term-linear}
\end{align} 
Next, we estimate the three nonlinear terms one by one.
The first term $I_1(e_\bu^n)$ can be rewritten by
\begin{eqnarray} 
  I_1(\overline{e}_\bu^n) & = &
  - \frac{1}{2}\Big\{
  ( \bu_h^{n-1} \cdot \nabla \, \overline{\bu}_h^n
  - \bu^{n-1} \cdot \nabla \, \overline{\bu}^n \, , \, \overline{e}_\bu^n)
  - \big[(\bu_h^{n-1} \cdot \nabla \, \overline{e}_\bu^n \, ,\, \overline{\bu}_h^n )
    - (\bu^{n-1} \cdot \nabla \, \overline{e}_\bu^n \, , \, \overline{\bu}^n )\big]
  \Big\}
  \nn \\
  & = & -\frac{1}{2} \left( (e_\bu^{n-1}+\theta_\bu^{n-1}) \cdot \nabla \,
  \overline{\bu}_h^n \, , \,\overline{e}_\bu^n\right)
  - \frac{1}{2} \big( \bu^{n-1}\cdot \nabla \,
  (\overline{e}_\bu^n + \overline{\theta}_\bu^n) \, ,\, \overline{e}_\bu^n \big)
  \nn \\
  &  & + \frac{1}{2}\big(\bu^{n-1} \cdot \nabla \, \overline{e}_\bu^n \, , \,
  \overline{e}_\bu^n + \overline{\theta}_\bu^n \big)
  + \frac{1}{2}\left((e_\bu^{n-1}+\theta_\bu^{n-1})\cdot \nabla \,
  \overline{e}_\bu^n \, , \, \overline{\bu}_h^n \right)
  \nn \\
  & := & \sum_{i=1}^4 J_i^n \,.
  \label{term-1}
\end{eqnarray}
We now estimate $\{J_i^n\}_{i=1}^4$.
The term $J_1^n + J_4^n$ can be bounded by
\begin{eqnarray}
  J_1^n + J_4^n & = &
  -\frac{1}{2} \left( (e_\bu^{n-1}+\theta_\bu^{n-1}) \cdot \nabla \,
  \overline{e}_{\bu}^n \, , \,\overline{e}_\bu^n\right)
  - \frac{1}{2} \left( (e_\bu^{n-1}+\theta_\bu^{n-1}) \cdot \nabla \,
  R_h\overline{\bu}^n,  \overline{e}_\bu^n\right)
  \nn \\
  &  & + \frac{1}{2}\left((e_\bu^{n-1}+\theta_\bu^{n-1})\cdot \nabla \,
  \overline{e}_\bu^n \, , \, \overline{e}_{\bu}^n \right)
  + \frac{1}{2}\left((e_\bu^{n-1}+\theta_\bu^{n-1})\cdot \nabla \,
  \overline{e}_\bu^n \, , \, R_h\overline{\bu}^n \right)
  \nn \\
  & = & - \frac{1}{2} \left( (e_\bu^{n-1}+\theta_\bu^{n-1}) \cdot \nabla \,
  R_h\overline{\bu}^n, \overline{e}_\bu^n\right)
  + \frac{1}{2}\left((e_\bu^{n-1}+\theta_\bu^{n-1})\cdot \nabla \,
  \overline{e}_\bu^n \, , \, R_h\overline{\bu}^n \right)
  \nn \\
  & \le &
  C \|e_\bu^{n-1}+\theta_\bu^{n-1}\|_{L^{2}}\|R_h\overline{\bu}^n\|_{W^{1,3}}
  \|\overline{e}_\bu^n\|_{L^{6}}
  +C \|e_\bu^{n-1}+\theta_\bu^{n-1}\|_{L^{2}}\|\overline{e}_\bu^n\|_{H^{1}}
  \|R_h\overline{\bu}^n\|_{L^{\infty}}
  \nn \\
  & \le &  C \|e_\bu^{n-1}+\theta_\bu^{n-1}\|_{L^{2}}\|\overline{e}_\bu^n\|_{H^{1}}
  \nn \\
  & \le & \epsilon \|\overline{e}_\bu^n\|_{H^{1}}^2
  + \epsilon^{-1} C \|e_\bu^{n-1}\|_{L^{2}}^2
  + \epsilon^{-1} C h^{2s} \,,
  \label{term-j-1-4}
\end{eqnarray}
where we have used the uniform boundedness results for $R_h\overline{\bu}^n$
in \refe{uniform-bound-u}.
And $J_2^n+J_3^n$ can be estimated directly
\begin{eqnarray}
  J_2^n+J_3^n & \le &
  C\|\bu^{n-1}\|_{L^{\infty}}
  \|\overline{\theta}_\bu^n\|_{H^1}
  \|\overline{e}_\bu^n \|_{L^2}
  + C \|\bu^{n-1}\|_{L^{\infty}}
  \|\overline{e}_\bu^n\|_{H^{1}} \|\overline{\theta}_\bu^n\|_{L^{2}}
  \nn \\
  & \le &
  C h^s \|\overline{e}_\bu^n \|_{L^2}
  + C \|\overline{e}_\bu^n\|_{H^{1}}  h^s
  \nn \\
  & \le & \epsilon \|\overline{e}_\bu^n \|_{H^1}^2
  + \epsilon^{-1}C (\|e_\bu^n \|_{L^2}^2 + \|e_\bu^{n-1} \|_{L^2}^2 + h^{2s}) \,.
  \label{term-j-2-3}
\end{eqnarray}
With the above estimates \refe{term-j-1-4} and \refe{term-j-2-3},
we get the following estimate
\begin{align} 
  I_1(\overline{e}_\bu^n) \le \epsilon \|\overline{e}_\bu^n\|_{H^{1}}^2
  + \epsilon^{-1} C (\|e_\bu^{n}\|_{L^{2}}^2 +\|e_\bu^{n-1}\|_{L^{2}}^2 + h^{2s}) \,.
  \label{term-1-end}
\end{align}
Then, we turn to estimate $I_2(\overline{e}_\bu^n)$ and $I_3(\overline{e}_\bB^n)$,
which can be rewritten by
\begin{eqnarray}
  I_2(\overline{e}_\bu^n) 
  & = &
  S_c \left( \C \, \overline{e}_\bB^{n} \, \times \bB_h^{n-1}\, , \, \overline{e}_\bu^n \right)
  + S_c\left( \C \, \Pi_h \overline{\bB}^{n} \, \times \bB_h^{n-1}
  - \C \, \overline{\bB}^{n} \, \times \bB^{n-1} \, , \, \overline{e}_\bu^n \right)
  \nn \\
  & = &
  S_c \left( \C \, \overline{e}_\bB^{n} \, \times \bB_h^{n-1}\, , \, \overline{e}_\bu^n \right)
  + S_c \left( \C \, \Pi_h \overline{\bB}^{n} \, \times (e_\bB^{n-1}+\theta_\bB^{n-1}),
  \overline{e}_\bu^n \right)
  \nn \\
  &  & + S_c\left( \C \, \overline{\theta}_\bB^{n} \,\times \bB^{n-1} \,,\,
  \overline{e}_\bu^n \right)\,,
  \nn \\[7pt]
  I_3(\overline{e}_\bB^n) 
  & = &
  S_c \left(\overline{e}_\bu^n \times \bB_h^{n-1}, \C \, \overline{e}_\bB^n\right)
  + S_c\left(R_h\overline{\bu}^n \times \bB_h^{n-1} - \overline{\bu}^n \times \bB^{n-1},
  \C \, \overline{e}_\bB^n\right)
  \nn \\
  & = &
  S_c \left(\overline{e}_\bu^n \times \bB_h^{n-1}, \C \, \overline{e}_\bB^n\right)
  + S_c\left(R_h\overline{\bu}^n \times (e_\bB^{n-1}+\theta_\bB^{n-1}) \, ,\,
  \C \, \overline{e}_\bB^n\right)
  \nn \\
  & &
  + S_c\left(\overline{\theta}_\bu^n \times \bB^{n-1} \,,\,
  \C \, \overline{e}_\bB^n\right).
  \nn
\end{eqnarray}
By noting the fact that
\[
S_c \left( \C \, \overline{e}_\bB^{n} \, \times \bB_h^{n-1}\, , \,
\overline{e}_\bu^n \right)
+ S_c \left(\overline{e}_\bu^n \times \bB_h^{n-1}, \C \, \overline{e}_\bB^n\right)
= 0 \,,
\]
we have
\begin{align}
  & I_2(\overline{e}_\bu^n) + I_3(\overline{e}_\bB^n)
  \nn \\
  & = 
  S_c \left( \C \, \Pi_h \overline{\bB}^{n} \, \times (e_\bB^{n-1}+\theta_\bB^{n-1}),
  \overline{e}_\bu^n \right)
  + S_c\big( \C \, \overline{\theta}_\bB^{n} \,\times \bB^{n-1} \,,\,
  \overline{e}_\bu^n \big)
  \nn \\
  & \quad
  + S_c\left(R_h\overline{\bu}^n \times (e_\bB^{n-1}+\theta_\bB^{n-1}) \, ,\,
  \C \, \overline{e}_\bB^n\right)
  + S_c\big(\overline{\theta}_\bu^n \times \bB^{n-1} \,,\,
  \C \, \overline{e}_\bB^n\big)
  \nn \\
  & \le 
  C \|\C \, \Pi_h \overline{\bB}^{n}\|_{L^3} \|e_\bB^{n-1}+\theta_\bB^{n-1}\|_{L^2}
  \|\overline{e}_\bu^n\|_{L^6}
  + C \|\C \, \overline{\theta}_\bB^{n}\|_{L^2} \|\bB^{n-1}\|_{L^3}
  \|\overline{e}_\bu^n\|_{L^6}
  \nn \\
  & \quad
  + C\|R_h\overline{\bu}^n\|_{L^{\infty}} \|e_\bB^{n-1}+\theta_\bB^{n-1}\|_{L^2}
  \|\C \, \overline{e}_\bB^n\|_{L^2}
  + C\|\overline{\theta}_\bu^n\|_{L^6} \|\bB^{n-1}\|_{L^3}
  \|\C \, \overline{e}_\bB^n\|_{L^2}
  \nn \\
  & \le  C (\|e_\bB^{n-1}\|_{L^2} + h^s)\|\overline{e}_\bu^n\|_{H^1}
  + C(\|e_\bB^{n-1}\|_{L^2} + h^s)
  \|\C \, \overline{e}_\bB^n\|_{L^2}
  + C h^s \|\C \, \overline{e}_\bB^n\|_{L^2}
  \nn \\
  & \le 
  \epsilon \|\overline{e}_\bu^n\|_{H^1}^2
  + \epsilon \|\C \, \overline{e}_\bB^n\|_{L^2}^2
  +\epsilon^{-1}C \|e_\bB^{n-1}\|_{L^2}^2
  +\epsilon^{-1}C h^{2s} \,,
  \label{nonlinear-result}
\end{align}
where we have used the uniform boundedness of $\Pi_h \bB$ in \refe{uniform-bound-B}.
Finally, taking estimates \refe{term-1-end} and \refe{nonlinear-result}
into \refe{error-fem-1-pre}, we arrive at
\begin{align} 
  & \frac{\|e_\bu^n\|_{L^2}^2 - \|e_\bu^{n-1}\|_{L^2}^2}{2 \tau}
  + \frac{\|e_\bB^n\|_{L^2}^2 - \|e_\bB^{n-1}\|_{L^2}^2}{2 \tau}
  + \frac{1}{R_e} \|\nabla \overline{e}_\bu^n \|_{L^2}^2
  + \frac{S_c}{R_m} \|\C \, \overline{e}_\bB^n\|_{L^2}^2
  \nn \\[6pt]
  & \le
  \epsilon \|\overline{e}_\bu^n\|_{H^1}^2
  + \epsilon \|\C \, \overline{e}_\bB^n\|_{L^2}^2
  + \epsilon^{-1}C\|e_\bu^n\|_{L^2}^2 
  + \epsilon^{-1}C \|e_\bu^{n-1}\|_{L^2}^2
  + C\|e_\bB^{n-1}\|_{L^2}^2
  + \epsilon^{-1}C\tau^2
  \nn \\
  & \quad
  + \epsilon^{-1}Ch^{2s} 
  - (D_\tau \theta_\bu^n , \overline{e}_\bu^n) 
  - ( D_\tau \theta_\bB^n , \overline{e}_\bB^n)
  - R_1(\overline{e}_\bu^n) - R_2(\overline{e}_\bB^n) \,.
\end{align} 
Then, we chose a small $\epsilon$ and sum up the last inequality
for the index $n=0$, $1$, $\ldots$, $k$ to derive that
\begin{align} 
  & \|e_\bu^n\|_{L^2}^2 + \|e_\bB^n\|_{L^2}^2
  + \tau \sum_{m=1}^n \left( \| \overline{e}_\bu^m \|_{H^1}^2
  + \|\C \, \overline{e}_\bB^m \|_{L^2}^2 \right)
  \nn \\
  & \le \tau \sum_{m=0}^n \left( \|e_\bu^m\|_{L^2}^2
  + C\|e_\bB^m\|_{L^2}^2 \right)
  + C\tau^2 + Ch^{2s} 
\end{align}
where we have used the estimates \refe{term-linear} for the linear terms.
By the discrete Gronwall's inequality in Lemma \ref{gronwall},
when $C \tau \le \frac{1}{2}$, we have
\begin{align}
  & \|e_\bu^n\|_{L^2}^2 + \|e_\bB^n\|_{L^2}^2
  + \tau \sum_{m=1}^n \left( \| \overline{e}_\bu^m \|_{H^1}^2
  + \|\C \, \overline{e}_\bB^m \|_{L^2}^2 \right)
  \nn \\
  & \le
  C\exp(\frac{TC}{ 1- C\tau})  (\tau^2 + h^{2s+2})
  \nn \\
  & \le C\exp(2{TC})  (\tau^2 + h^{2s+2}) \,.
  \label{result-error}
\end{align}
Theorem \ref{thm-main-error} is proved by combining \refe{result-error}
and the projection error estimates in Lemma \ref{projection-stokes-mag}.
\end{proof}

\section{Numerical results}
\label{numer-section}
\setcounter{equation}{0}

In this section, we provide some numerical experiments to confirm
our theoretical analyses and demonstrate the
accuracy, stability and robustness of the proposed linearized
conservative FEM. The computations are performed with
FEniCS \cite{fenics}.

\subsection{Two-dimensional numerical results}

We introduce several two-dimensional operators first.
For scalar function $p$ and vector function $\bB = [B_1,B_2]^T$,
the two-dimensional operators $\D$, $\nabla$, $\curl$ and $\C$ 
are defined by
\begin{align}
  &\nabla \cdot \bB = \frac{\partial B_1}{\partial x}
  + \frac{\partial B_2}{\partial y} , \, 
  \nabla p  = 
  \left[\frac{\partial p}{\partial x}\, ,
    \frac{\partial p}{\partial y} \right]^{T} , \,
  \curl \, \bB = \frac{\partial B_2}{\partial x}
  - \frac{\partial B_1}{\partial y} \, , 
  \C \, p = \left[\frac{\partial p}{\partial y}\, ,
    -\frac{\partial p}{\partial x} \right]^{T}  .
  \nn
\end{align}
In two dimensional space, the dynamical incompressible MHD equations can be reduced to
\begin{empheq}[left=\empheqlbrace]{align} 
  & \frac{\partial \bu}{\partial t} - \frac{1}{R_e} \Delta \bu
  + \bu \cdot \nabla \, \bu
  + \nabla p - S_c \, \curl \bB \,
  \left[
    \begin{array}{r}
       \!\!\! -B_2 \!\!\!
      \\
       \!\!\! B_1 \!\!\!
    \end{array}      
    \right]
  = \mathbf{f},
  && \bm{x} \in \Omega,
  \label{pde-1-2d}\\[4pt]
  & \frac{\partial \bB}{\partial t}
  + \frac{S_c}{R_m} \C \, (\curl \, \bB) - S_c \C \, (u_1 B_2 - u_2 B_1)
  = \mathbf{0},
  && \bm{x} \in \Omega,
  \label{pde-2-2d}\\[4pt]
  & \nabla \cdot \bu=0,
  && \bm{x} \in \Omega,
  \label{pde-3-2d}\\[4pt]
  & \nabla \cdot \bB=0,
  && \bm{x} \in \Omega,
  \label{pde-4-2d}
\end{empheq}
where $\bu=[u_1,u_2]^T$ and $\bB=[B_1,B_2]^T$.
The above equation system is supplemented with homogeneous boundary conditions.
\begin{empheq}[left=\empheqlbrace]{align} 
  & \bu = 0,
  \quad \bm{x} \in \partial \Omega,
  \label{bc-u-2d} \\
  & \bB \cdot \n = 0, \quad \curl \,\bB = 0,
  \quad \bm{x} \in \partial \Omega,
  \label{bc-mag-2d}
\end{empheq}
and initial conditions
\begin{align}
  \bu(\bm{x},0) = \bu_0(\bm{x}),\quad
  \bB(\bm{x},0) = \bB_0(\bm{x}),
  \label{initial-2d}
\end{align}
where $\D \, \bu_0(\bm{x}) = \D \, \bB_0(\bm{x}) = 0$.
It should be remarked that the two-dimensional incompressible MHD equations can be
reformulated into scalar form with vorticity and magnetic stream functions,
i.e., the $\omega-\psi$ formulation, see \cite{CorMar}.
However, for consistency with the original three-dimensional model,
we still use \refe{pde-1-2d}-\refe{initial-2d}.
Analogous to the three-dimensional scheme, 
the linearized backward Euler FEM for \refe{pde-1-2d}-\refe{initial-2d}
is to look for
$(\bu_h^n, \bB_h^n, p_h^n) \in \bX_h^k \times \bQ_h^k \times M_h^k$
with $(p_h^{n},1)=0$,
such that for any $(\bv_h,\bC_h,q_h)\in \bX_{h}^k \times {\bQ}_h^k \times {M}_{h}^k$
\begin{empheq}[left=\empheqlbrace]{align} 
  & (D_\tau \bu_h^n, \bv_h)
  + \frac{1}{R_e} (\nabla \overline{\bu}_h^n, \nabla \bv_h)
  + \frac{1}{2} \big[ ( \bu_h^{n-1} \cdot \nabla \, \overline{\bu}_h^n, \bv_h)
    - (\bu_h^{n-1} \cdot \nabla \, \bv_h, \overline{\bu}_h^n ) \big]
  \nn \\
  & \qquad  \qquad  \quad 
  - ({p}_h^n, \nabla \cdot \bv_h)
  + S_c \left(\curl \, \overline{\bB}_h^{n} \, , \,
  (\bB_h^{n-1})^T \mathsf{T}_{\pi/2} \bv_h\right)
  = (\mathbf{f}^{n}, \bv_h)
  \label{fem-1-2d}\\[5pt]
  & ( D_\tau \bB_h^n , \bC_h )
  + \frac{S_c}{R_m} (\curl \, \overline{\bB}_h^n, \curl \, \bC_h)
  - S_c \left((\bB_h^{n-1})^T \mathsf{T}_{\pi/2} \overline{\bu}_h^n\, ,\,
  \curl \, \bC_h\right)
  = 0\,,
  \label{fem-2-2d}\\[5pt]
  & (\nabla \cdot \overline{\bu}_h^n, q_h) = 0,
  \label{fem-3-2d}
\end{empheq}
where $\mathsf{T}_{\pi/2}$ is the rotation matrix defined by
\[ \mathsf{T}_{\pi/2} = 
\left[\begin{array}{rr}
    0  & -1\\
    1  & 0
  \end{array}
  \right] \,.
\]
Again, we take
$\bu_h^0 = \Pi_h \bu^0$ and $\bB_h^0 = \Pi_h \bB^0$
at the initial time step.

%==============================
%  numerical example first
%==============================
\begin{example}
  \label{example-2d-artificial}
We test the performance of the proposed scheme \refe{fem-1-2d}-\refe{fem-3-2d}
for a two-dimensional Hartmann flow problem, see \cite{HuMaXu}.
We set $\Omega = (0,1)^2$ and $R_e=S_c=R_m=1.0$ in this example.
The Hartmann flow problem has an explicit analytic solution
as follows
\begin{align}
  \bu = (u_1, 0)^T,
  \quad
  p = -x-\frac{(B_1)^2}{2},
  \quad
  \bB = (B_1, 1)^T,
  \label{exact-example-1}
\end{align}
where
\[
u_1 = \frac{\cosh(\frac{1}{2}) -\cosh(y)}{2\sinh(\frac{1}{2})},
\quad
B_1 = \frac{\sinh(y) - 2\sinh(\frac{1}{2}) \, y}{2\sinh(\frac{1}{2})} \,.
\]
It should be noted that the expressions \refe{exact-example-1}
satisfy the following stationary incompressible MHD equations
\begin{empheq}[left=\empheqlbrace]{align} 
  & - \Delta \bu
  + \bu \cdot \nabla \, \bu
  + \nabla p - \curl \bB \,
  \left[
    \begin{array}{r}
       \!\!\! -B_2 \!\!\!
      \\
       \!\!\! B_1 \!\!\!
    \end{array}      
    \right]
  = \mathbf{0},
  && \bm{x} \in \Omega,
  \label{pde-1-2d-stationary}\\
  &\C \, (\curl \, \bB) - \C \, (u_1 B_2 - u_2 B_1)
  = \mathbf{0},
  && \bm{x} \in \Omega,
  \label{pde-2-2d-stationary}\\
  & \nabla \cdot \bu=0,
  && \bm{x} \in \Omega,
  \label{pde-3-2d-stationary}\\
  & \nabla \cdot \bB=0,
  && \bm{x} \in \Omega.
  \label{pde-4-2d-stationary}
\end{empheq}
However, the exact solution \refe{exact-example-1}
does not fulfill the boundary condition
\refe{bc-u-2d}-\refe{bc-mag-2d} and constraint $(p,1)=0$.
Therefore, we simply take the Dirichlet boundary condition
based on the analytic solutions for $\bu_h^n$ and $\bB_h^n$,
while we set $p_h^n = p(0,0)$ on the node at the Origin.
For the initial solution, we take
\begin{align}
  \bu = (1,0)^T, \quad
  \bB = (0,1)^T.
\end{align}
In the computation, we set $\tau = 0.005$ and use
$\bX_h^2 \times \bQ_h^2 \times M_h^1$
on a uniform triangular mesh generated by FEniCS
with $h=\frac{1}{128}$.

The numerical results obtained at $T=10$ are shown
in Figure \ref{hartmann-2d}.
It is easy to see that the numerical results computed by
the proposed linearized mixed FEM agree well with \cite{HuMaXu},
where a divergence free scheme was used to solve the incompressible MHD equations.
We also plot The error functions in Figure \ref{hartmann-errror-2d}.
\begin{figure}[ht]
\centering
\begin{tabular}{c}
\epsfig{file=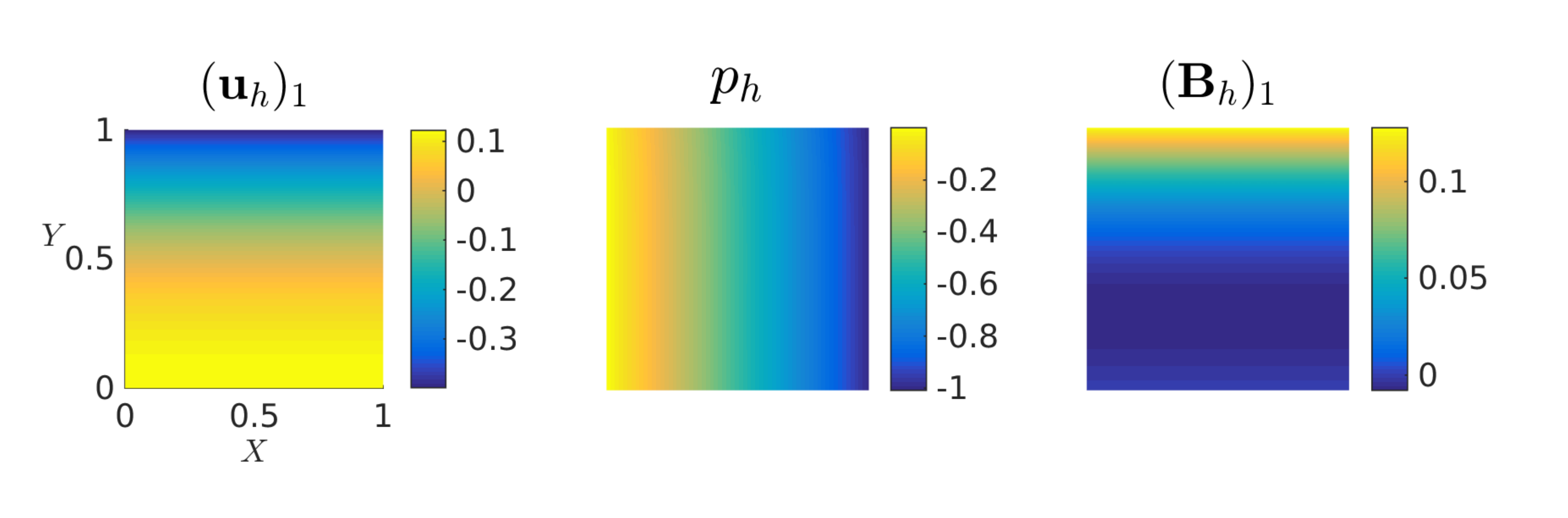,height=2.0in,width=6in}
\end{tabular}
\caption{Numerical results of $(\bu_h)_1$, $p_h$ and $(\bB_h)_1$
  at $T=10$ computed by \refe{fem-1-2d}-\refe{fem-3-2d}
  with $\tau=0.005$ and $h=1/128$. (Example \ref{example-2d-artificial})}
\label{hartmann-2d}
\end{figure}
\begin{figure}[ht]
  \centering
  \begin{tabular}{c}
    \epsfig{file=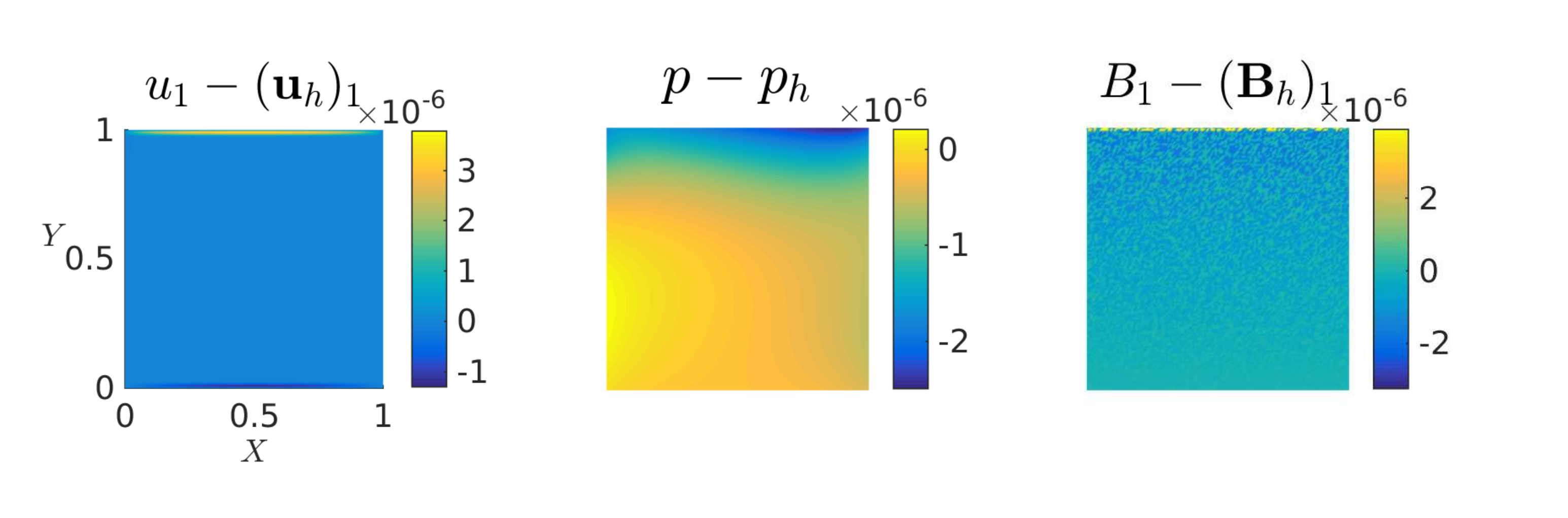,height=2.0in,width=6in}
  \end{tabular}
  \caption{Error plots of $(\bu_h)_1$, $p_h$ and $(\bB_h)_1$
    at $T=10$ by \refe{fem-1-2d}-\refe{fem-3-2d}
    with $\tau=0.005$ and $h=1/128$. (Example \ref{example-2d-artificial})}
  \label{hartmann-errror-2d}
\end{figure}

\end{example}

\subsection{Three-dimensional numerical experiments}

\begin{example}
  \label{example-3d-artificial}
In this example, we test the convergence of the proposed scheme
\refe{fem-1}-\refe{fem-3} 
for a three-dimensional artificial problem. Here $\Omega= (0,1)^3$
and $R_e=S_c=R_m=1.0$. The exact solution is taken to be
\begin{align}
  \bu = \left[
    \begin{array}{r}
      e^{t} \cos(y)
      \\
      e^{t} \cos(z)
      \\
      e^{t} \cos(x)      
    \end{array}      
    \right],
  \quad
  p = e^{t} (x-0.5) \cos(y) \sin(z),
  \quad
  \bB = \left[
    \begin{array}{r}
      e^{t} \sin(y)
      \\
      e^{t} \sin(z)
      \\
      e^{t} \cos(x)      
    \end{array}      
    \right].
  \label{exact-example-3d}
\end{align}
Here, one can verify that $(p,1) = 0$.
A uniform mesh is used in our computation.
In each direction, there are $M+1$ vertices and therefore $h = \sqrt{3}/M$,
see Figure \ref{mesh-3d-4x4} for illustration when $M=4$.
\begin{figure}[ht]
  \centering
  \begin{tabular}{c}
    \epsfig{file=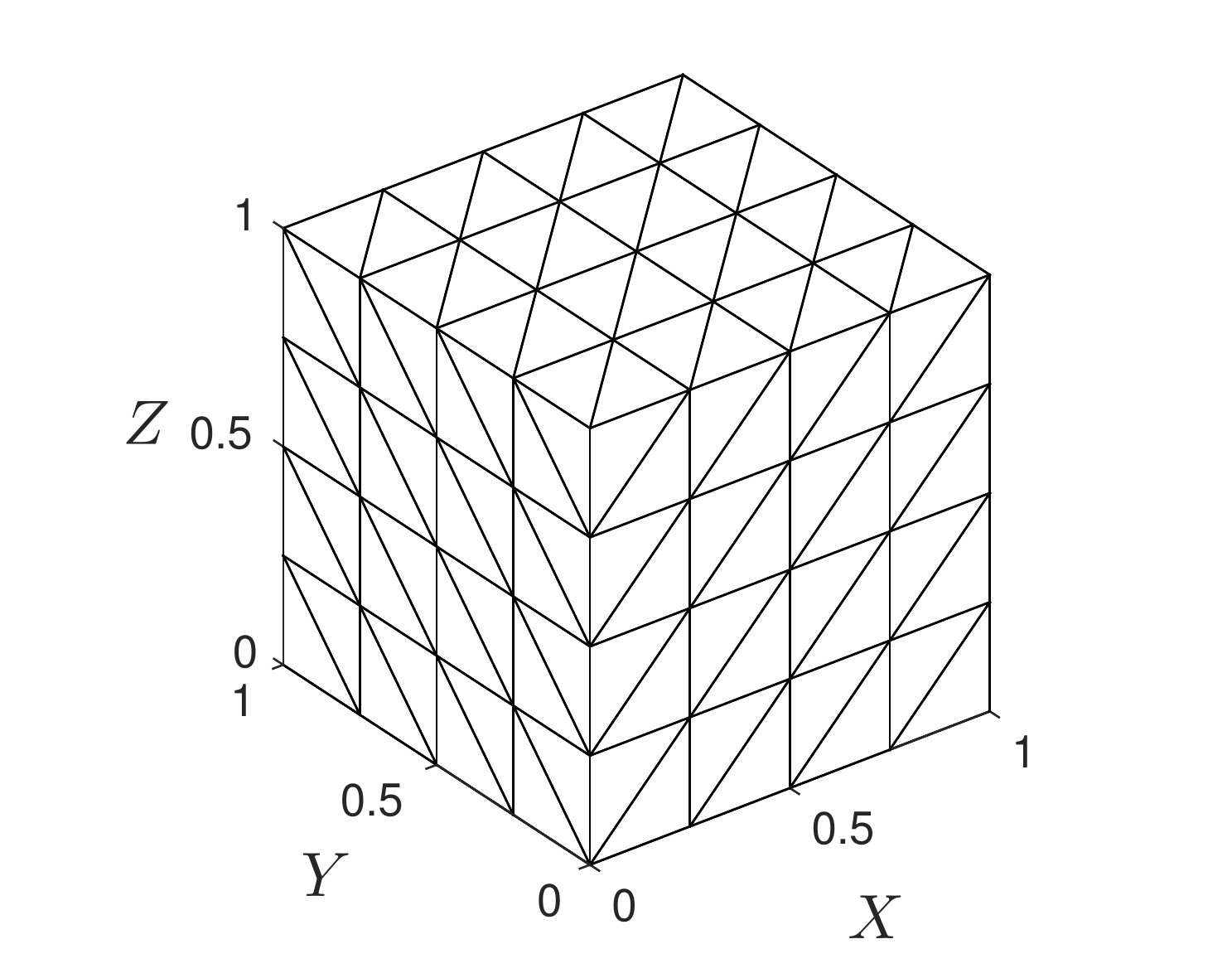,height=2.4in,width=3in}
  \end{tabular}
  \caption{An illustration of uniform tetrahedral mesh with $M=4$.
    (Example \ref{example-3d-artificial})}
  \label{mesh-3d-4x4}
\end{figure}

To show the optimal convergence rate of the proposed method,
we take $\tau = \frac{1}{2M}$ for the lowest order method
$\bX_h^2 \times \bQ_h^1 \times M_h^1$
and $\tau = \frac{1}{M^2}$ for $\bX_h^2 \times \bQ_h^2 \times M_h^1$, respectively.
The $L^2$ errors are shown in Table \ref{error-3d},
where first order convergence for $\bX_h^2 \times \bQ_h^1 \times M_h^1$
and second order convergence for $\bX_h^2 \times \bQ_h^2 \times M_h^1$
are obtained.
Numerical results from Table \ref{error-3d} verified that
the proposed linearized scheme has optimal convergence rate,
provided that the exact solution is smooth enough.

%==============================================================
%----------- table for example 3d
%==============================================================
\begin{table}[ht]
\begin{center}
\caption{ $L^2$-norm errors for the linearized scheme on the unit cube.
(Example \ref{example-3d-artificial})} \label{error-3d}
\begin{tabular}{|c|c|c|c|c|c|c|}
\hline
\multicolumn{7}{|c|}{
$\bX_h^2 \times \bQ_h^1 \times M_h^1$
  \quad ($\tau = \frac{1}{2M}$)} \\
\hline
&  $\|\bu_h^{N} - \bu\|_{L^2}$
& Order
&  $\|p_h^{N} - p\|_{L^2}$
& Order
&  $\|\bB_h^{N} - \bB\|_{L^2}$
& Order
\\
\hline
M= 4  & 6.4876e-03  & $-$     & 4.4078e-01 & $-$    & 2.6777e-01 & $-$    \\
M= 8  & 3.3178e-03  & 0.9674  & 2.2073e-01 & 0.9978 & 1.3366e-01 & 1.0024 \\
M= 16 & 1.6613e-03  & 0.9979  & 1.1004e-01 & 1.0043 & 6.6819e-02 & 1.0002 \\
\hline
\hline
\multicolumn{7}{|c|}{$\bX_h^2 \times \bQ_h^2 \times M_h^1$
  \quad ($\tau = \frac{1}{M^2}$)} \\
\hline
&  $\|\bu_h^{N} - \bu\|_{L^2}$
& Order
&  $\|p_h^{N} - p\|_{L^2}$
& Order
&  $\|\bB_h^{N} - \bB\|_{L^2}$
& Order
\\
\hline
M= 4  & 3.2879e-03  & $-$    & 2.0861e-01 & $-$    & 4.2069e-02 & $-$   \\
M= 8  & 8.3121e-04  & 1.9839 & 5.3957e-02 & 1.9509 & 1.0361e-02 & 2.0216\\
M= 16 & 2.0776e-04  & 2.0003 & 1.3611e-02 & 1.9870 & 2.5797e-03 & 2.0059\\
\hline
\end{tabular}
\end{center}
\end{table}

\end{example}

\begin{example}
  \label{example-3d-cavity-flow}

\begin{figure}[ht]
  \centering
  \begin{tabular}{cc}
    \epsfig{file=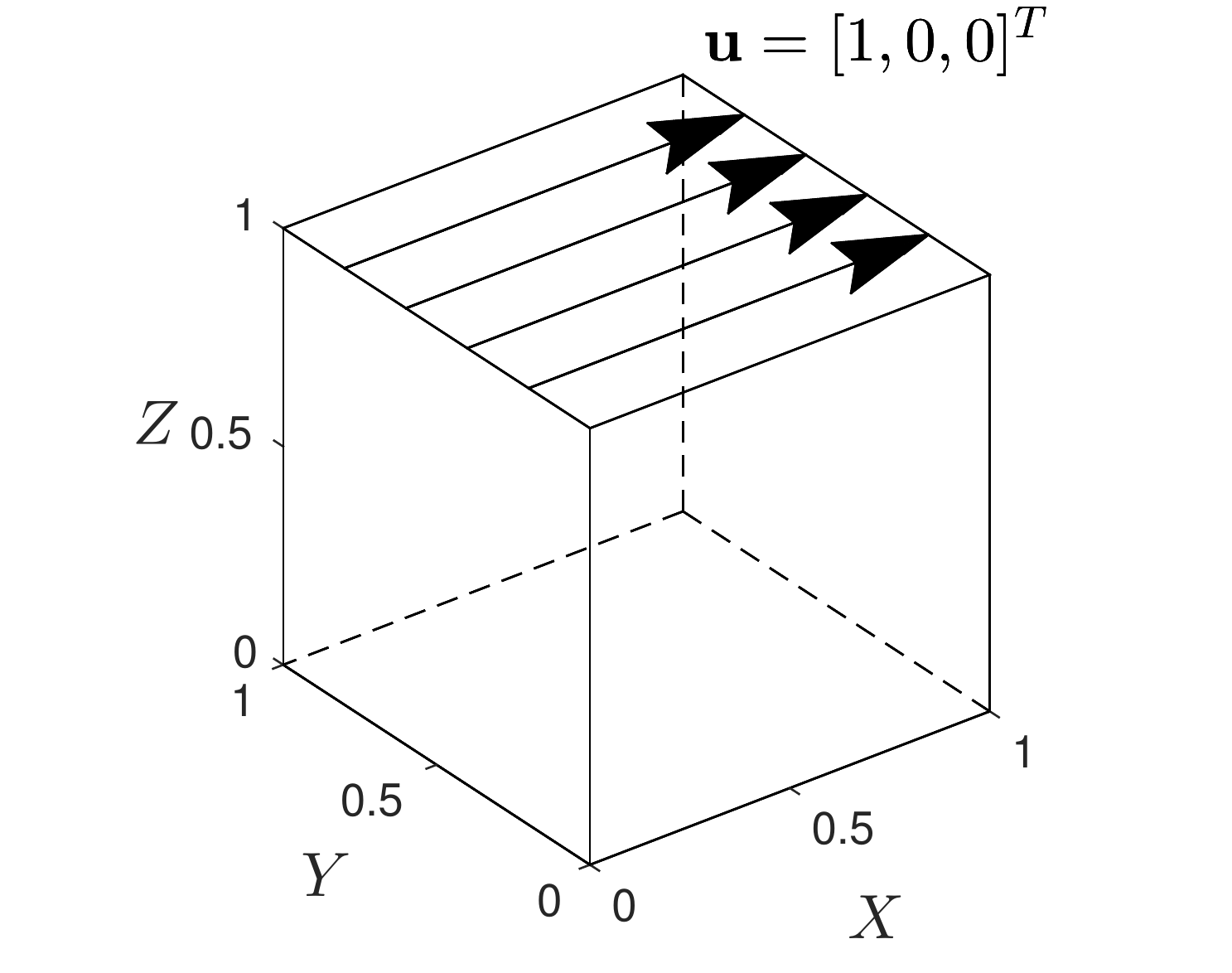,height=2.5in,width=3in}&
    \epsfig{file=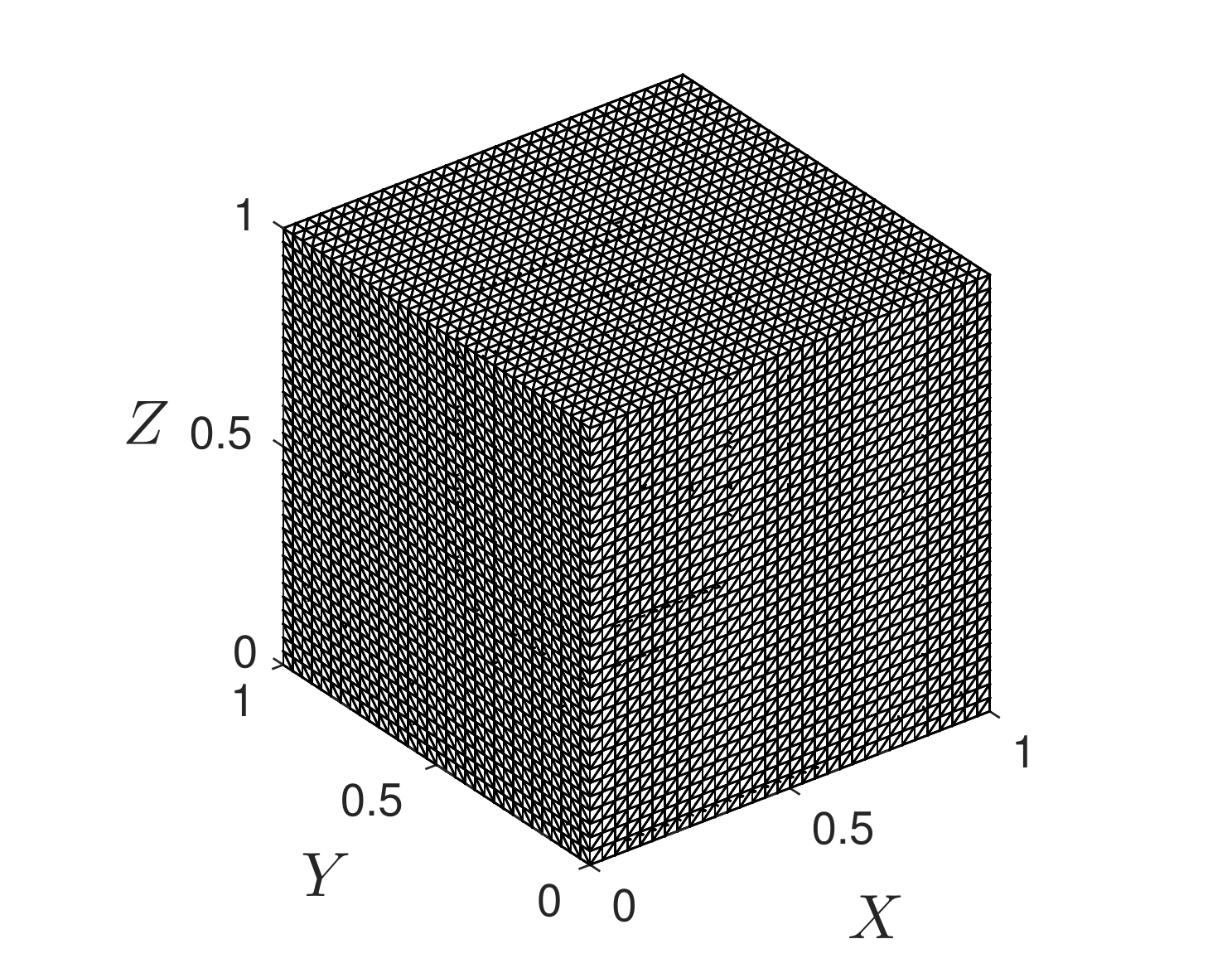,height=2.5in,width=3in}    
  \end{tabular}
  \caption{Left: illustration of the problem.
    Right: the mesh used in the computation, $M=\frac{1}{32}$
    $35,937$ nodes, $196,608$ elements.
    $238,688$ edges and $399,360$ faces.
    (Example \ref{example-3d-cavity-flow})}
  \label{flow_illustration}
\end{figure}
  
In the final example, we test the performance of the proposed scheme
for a benchmark lid driven cavity problem on the unit cube domain $\Omega= (0,1)^3$
see Figure \ref{flow_illustration}.
Here, the physical parameters are chosen to satisfy
the following incompressible MHD equations
\begin{empheq}[left=\empheqlbrace]{align} 
  & \frac{\partial \bu}{\partial t} - \frac{1}{100} \Delta \bu
  + \bu \cdot \nabla \, \bu
  + \nabla p - \frac{1}{20} \, \C \, \bB \, \times\bB =\mathbf{f},
  && \bm{x} \in \Omega,
  \label{lid-cavity-1}\\[4pt]
  & \frac{\partial \bB}{\partial t}
  + \frac{1}{200} \C \, (\C \, \bB) - \C \, (\bu \times \bB)
  = \mathbf{0},
  && \bm{x} \in \Omega,
  \label{lid-cavity-2}\\[4pt]
  & \nabla \cdot \bu=0,
  && \bm{x} \in \Omega,
  \label{lid-cavity-3}\\[4pt]
  & \nabla \cdot \bB=0,
  && \bm{x} \in \Omega,
  \label{lid-cavity-4}
\end{empheq}
The initial conditions are taken to be
\begin{align}
  \bu_0 = (v,0,0)^T, \quad
  \bB_0 = (1,0,0)^T
\end{align}
where
\begin{align}
  v(x,y,z) = 
  \left\{
    \begin{array}{ll}
      \left(1+\frac{1}{\alpha}(z-1) \right)^2, &  z \ge 1-\alpha , \quad
      \alpha = 0.001, \\
      0, & \textrm{otherwise}
    \end{array}
    \right.  
\end{align}    
The boundary condition on $\partial \Omega$ are set to be
\begin{align}
  \bu = \bu_0, \quad  \bB \cdot \mathbf{n} = 0, \quad
  \C \, \bB \times \mathbf{n} = \mathbf{0}.
\end{align}
This example was tested in \cite{Hiptmair_LMZ},
where a divergence free approach was used.
Some similar and simplified two-dimensional model
were tested by several authors with different methods, e.g.,
see \cite{Marioni,Phillips,Shatrov}.
It should be noted that due to the different nondimensionalization procedure,
the unknown $\bB$ in the above MHD equations
\refe{lid-cavity-1}-\refe{lid-cavity-4}
is a little bit different with the one used in \refe{pde-1}-\refe{pde-4}.
However, the scheme can be applied to 
\refe{lid-cavity-1}-\refe{lid-cavity-4} with a slight modification
on the coefficients.

In the computation, we use $\bX_h^2 \times \bQ_h^2 \times M_h^1$
on a uniform tetrahedral mesh with $M=32$.
There are $823,875$ dofs for $\bu_h$,
$35,937$ dofs for $p_h$ and $1,276,096$ dofs for $\bB_h$, respectively.
The time step $\tau = 0.01$  is used in the computation.

As numerical results reported in \cite[Example 5.4]{Hiptmair_LMZ},
shows that the stationary state arrives at $T=4$,
we show the numerical results at $T=4$ in Figures \ref{streamline} and 
streamline of $\bu_h$ and at $y=0.5$ in Figure \ref{streamline}.
Numerical experiments with finer time step $\tau = 0.005$
have been done to verify the streamline pattern.
From Figures \ref{streamline}, we see that
the streamline at $y=0.5$ obtained by the proposed scheme \refe{fem-1}-\refe{fem-3}
is very similar to those reported in \cite[Example 5.4]{Hiptmair_LMZ},
where a divergence free scheme was used.
The pressure contour plots in Figure \ref{pressurecontour} also agree well with
previous results in \cite[Example 5.4]{Hiptmair_LMZ}.
\begin{figure}[ht]
  \centering
  \begin{tabular}{c}
    \epsfig{file=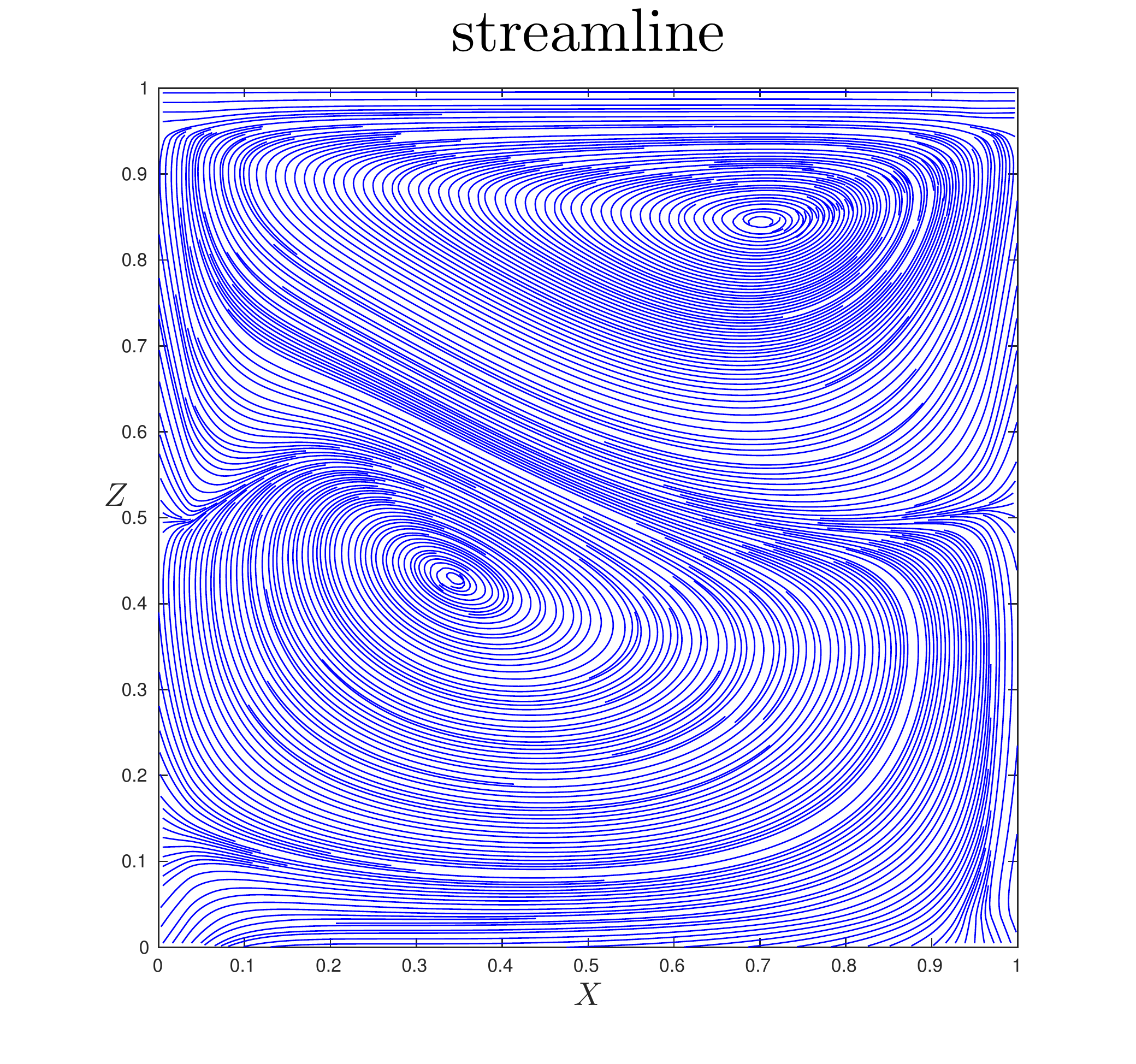,height=4in,width=4in}
  \end{tabular}
  \caption{Streamline of $\bu_h$ at $y=0.5$, $T=4.0$.
    Computed by $\bX_h^2 \times \bQ_h^2 \times M_h^1$
    on a uniform mesh with $M=\frac{1}{32}$, $\tau = 0.01$.
    (Example \ref{example-3d-cavity-flow})}
  \label{streamline}
\end{figure}
\begin{figure}[ht]
  \centering
  \begin{tabular}{c}
\epsfig{file=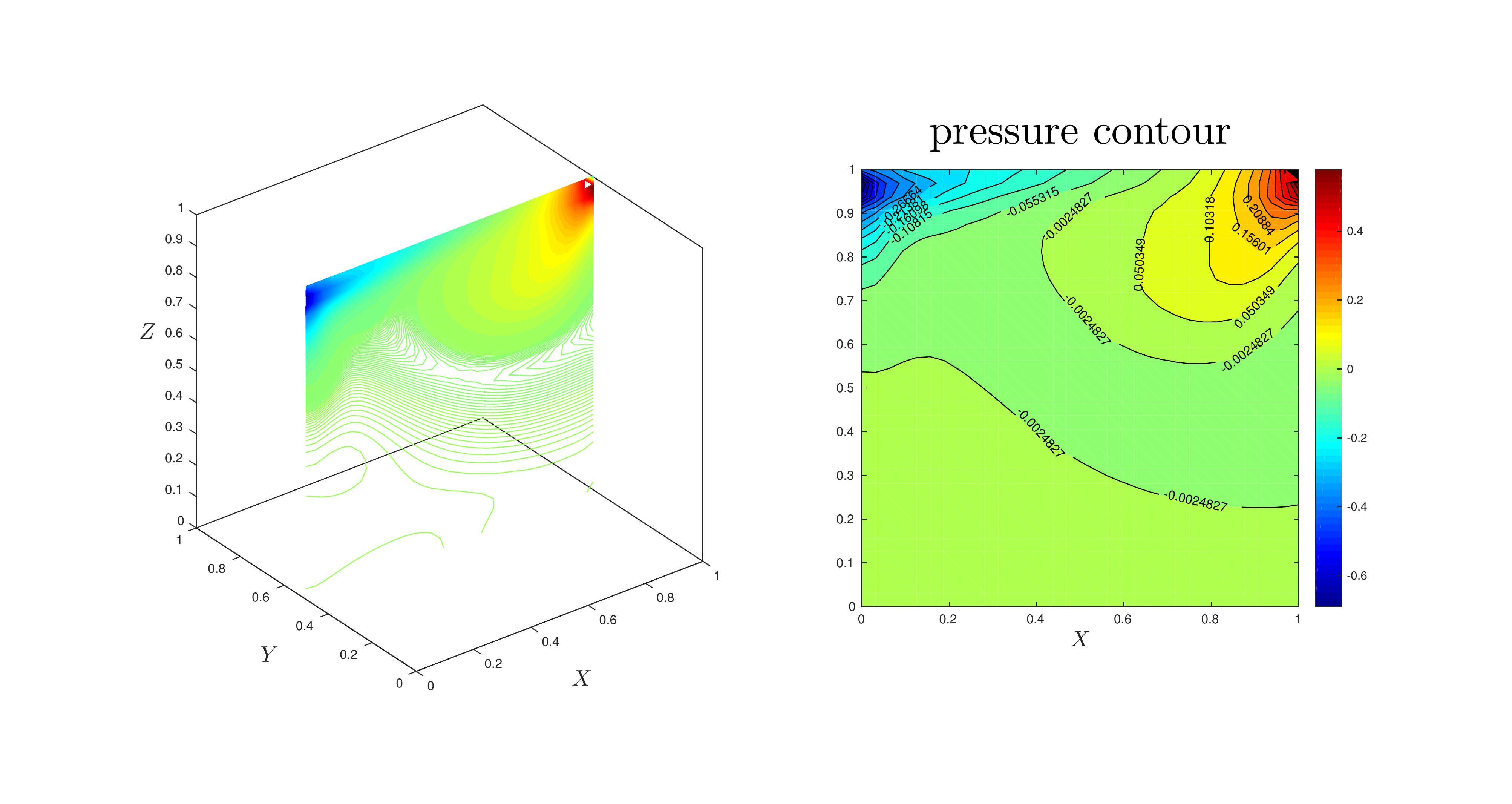,height=3.5in,width=6.0in}
  \end{tabular}
  \caption{Pressure contour of $p_h$ at $y=0.5$, $T=4.0$.
    Computed by $\bX_h^2 \times \bQ_h^2 \times M_h^1$
    on a uniform mesh with $M=\frac{1}{32}$, $\tau = 0.01$.
    (Example \ref{example-3d-cavity-flow})}
  \label{pressurecontour}
\end{figure}

As time evolves, we observe that the numerical solutions reach
the stationary state around $T=10.0$ with relative error 
\begin{align}
  \frac{\|\bu_h^n-\bu_h^{n-1}\|_{L^2}}{\|\bu_h^n\|_{L^2}}
  + \frac{\|p_h^n-p_h^{n-1}\|_{L^2}}{\|p_h^n\|_{L^2}}
  + \frac{\|\bB_h^n-\bB_h^{n-1}\|_{L^2}}{\|\bB_h^n\|_{L^2}}
  = 1.067 \times 10^{-4}.
  \label{error-indicator-lid}
\end{align}
We present the streamline and pressure plots at $T=10.0$ in Figures
\ref{streamline-steady-state} and \ref{pressurecontour-steady-state},
which are different with the results \cite[Example 5.4]{Hiptmair_LMZ}.
In our computation, the error \refe{error-indicator-lid} at $T=10.0$
still decreases as time evolves, however, with an extremely slow decay rate
(around $10^{-7}$ for each time step).
\begin{figure}[ht]
  \centering
  \begin{tabular}{c}
    \epsfig{file=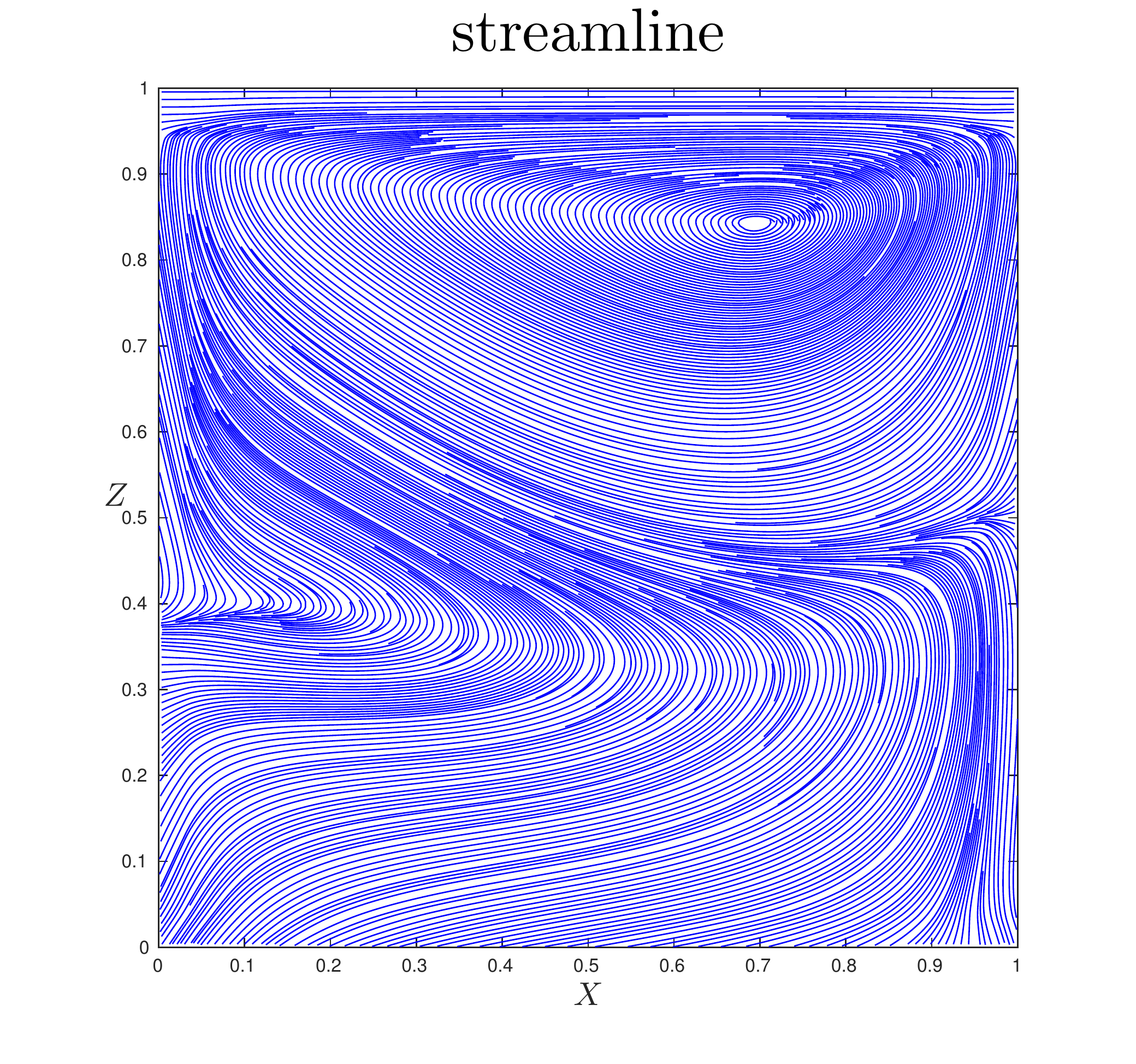,height=4in,width=4in}
  \end{tabular}
  \caption{Streamline of $\bu_h$ at $y=0.5$, $T=10.0$.
    Computed by $\bX_h^2 \times \bQ_h^2 \times M_h^1$
    on a uniform mesh with $M=\frac{1}{32}$, $\tau = 0.01$.
    (Example \ref{example-3d-cavity-flow})}
  \label{streamline-steady-state}
\end{figure}
\begin{figure}[ht]
  \centering
  \begin{tabular}{c}
\epsfig{file=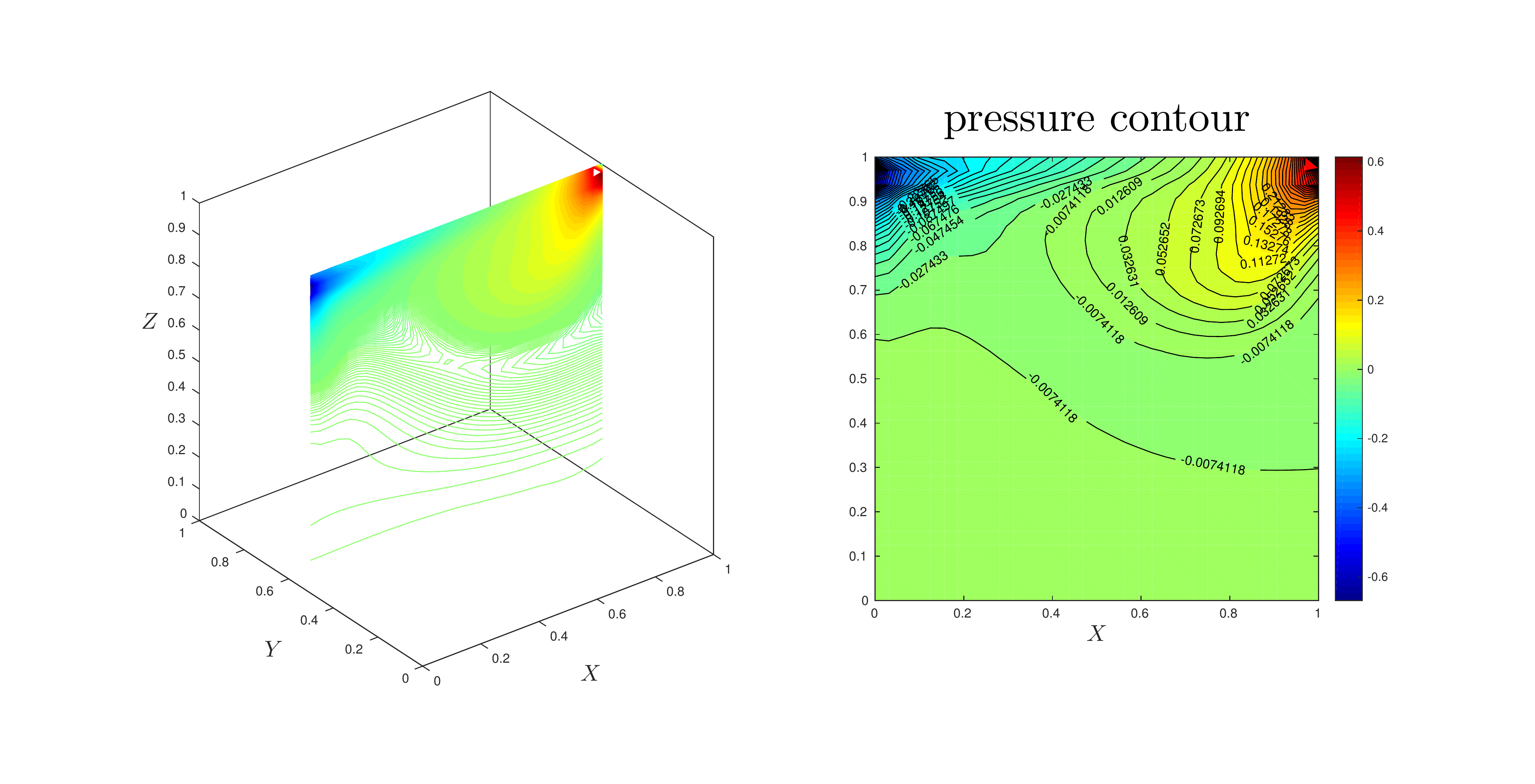,height=3.5in,width=6.0in}
  \end{tabular}
  \caption{Pressure contour of $p_h$ at $y=0.5$, $T=10.0$.
    Computed by $\bX_h^2 \times \bQ_h^2 \times M_h^1$
    on a uniform mesh with $M=\frac{1}{32}$, $\tau = 0.01$.
    (Example \ref{example-3d-cavity-flow})}
  \label{pressurecontour-steady-state}
\end{figure}

\end{example}

\section{Conclusions and remarks}
\label{numer-conclusion}
\setcounter{equation}{0}

The point of view that has been favored in some previous works is that
violating the divergence constraint at the discrete level
might lead to loss of energy preserving, which then
result in nonphysical numerical solutions.
However, we show in this paper that the energy preserving
still holds without satisfying the point-wise divergence free condition.
Furthermore, numerical experiments for the Hartmann flow and
the lid-driven cavity incompressible MHD flow 
demonstrate that the proposed scheme provide numerical results as good
as that obtained by schemes satisfying $\D \bB_h = 0$.

In this paper, we use a linearized backward Euler scheme,
which is first order accurate in the temporal direction.
The main reason is that, for the incompressible MHD problem,
the large storage is a crucial issue in the three-dimensional space.
There are many ways to design higher order integrator.
For instance, a second order linearized FEM is to look for
$(\bu_h^n, \bB_h^n, p_h^n)
\in \bX_h^k \times \bQ_h^{\widehat{k}} \times M_h^k$, such that
for any $(\bv_h, \bC_h, q_h) \in \bX_{h}^k \times {\bQ}_h^{\widehat{k}} \times {M}_{h}^k$
\begin{empheq}[left=\empheqlbrace]{align} 
  & \frac{1}{\tau}(\frac{3}{2}\bu_h^n - 2\bu_h^{n-1} - \frac{1}{2}\bu_h^{n-2}, \bv_h)
  + \frac{1}{R_e} (\,\nabla\, {\bu}_h^n \,,\, \nabla \bv_h)
  + \frac{1}{2} \big[ (\widehat{\bu}_h^{n} \cdot \nabla \, {\bu}_h^n, \bv_h)
    - (\widehat{\bu}_h^{n} \cdot \nabla \, \bv_h \,,\, {\bu}_h^n ) \big]
  \nn \\
  & \qquad \qquad \quad \!
  - ({p}_h^n, \nabla \cdot \bv_h)
  - S_c (\C \, {\bB}_h^{n} \, \times \widehat{\bB}_h^{n} \, , \, \bv_h)
  = (\mathbf{f}^{n}, \bv_h),
  \nn \\[3pt]
  & \frac{1}{\tau}(\frac{3}{2}\bB_h^n - 2\bB_h^{n-1}
  + \frac{1}{2}\bB_h^{n-2} , \bC_h )
  + \frac{S_c}{R_m} (\C \, {\bB}_h^n, \C \, \bC_h)
  - S_c ({\bu}_h^n \times \widehat{\bB}_h^{n}, \C \, \bC_h)
  = 0\,,
  \nn \\[3pt]
  & (\nabla \cdot {\bu}_h^n \, , \, q_h) = 0,
  \nn
\end{empheq}
where $\widehat{\bu}_h^{n} = 2{\bu}_h^{n-1} - {\bu}_h^{n-2}$
and  $\widehat{\bB}_h^{n} = 2{\bB}_h^{n-1} - {\bB}_h^{n-2}$.
For this three-level method, the linearized backward Euler FEM
\refe{fem-1}-\refe{fem-3} can be used to compute $(\bu_h^1, \bB_h^1, p_h^1)$.
We shall also remark that,
an $L^2$ error estimates of $O(\tau^2 + h^s)$ can be obtained by
a similar analysis, provided enough temporal regularity.
We focus on the homogeneous boundary condition \refe{bc-u}-\refe{bc-mag}
in this paper. 
It should e noted that the boundary condition \refe{bc-mag-alternate}
can be easily implemented with the {N\'ed\'elec} edge element.
All the theoretical results can be extended to models with
more complicated boundary conditions.

\section*{Acknowledgment}
The authors would like to thank Mr.~Ben Dai and Prof.~Junhui Wang
for help on conducting the numerical experiments.

%=====================================
%===   Ref.
%=====================================

\end{document}